\def\boxit#1{\vbox{\hrule\hbox{\vrule\kern3pt
        \vbox{\kern3pt#1\kern3pt}\kern3pt\vrule}\hrule}}
\def\reals{ { {\rm  I \kern-0.15em R }  } }
\def\complex{ {\,{{\rm C} \kern-0.50em \raise0.20ex {  |}}\, }}
\def\Rbf{{\bf R}}
\def\be{\begin{equation}}
\def\ee{\end{equation}}
\def\defeq{{\stackrel{\Delta}{=}}}
\def\scalefig#1{\epsfxsize #1\textwidth}
\def\Rxx{\Rbf_{\ssstyle X\kern-.1em X}}
\let\ssstyle=\scriptscriptstyle
\def\eg{{\it e.g.,\ \/}}
\def\etal{{\it et al. \/}}
\def\ie{{\it i.e.,\ \/}}
\def\Kout{\setbox1=\hbox{\Huge\bf K}\hbox to
1.05\wd1{\hspace{.05\wd1}
\def\Sout{\setbox1=\hbox{\Huge\bf S}\hbox to 1.05\wd1{\hspace{.05\wd1}

\IEEEoverridecommandlockouts

\overrideIEEEmargins

\def\ie{{\it i.e.,\ \/}}

\def\defeq{{\stackrel{\Delta}{=}}}

\def\scalefig#1{\epsfxsize #1\textwidth}

\newcommand{\mbbE}{\mathbb{E}}

\newtheorem{theorem}{Theorem}

\title{\Large\bf  Learning in A Changing World: Restless Multi-Armed Bandit with Unknown Dynamics}

\author{Haoyang Liu,  ~~~  Keqin Liu, ~~~ Qing Zhao\\
 University of California, Davis, CA 95616\\
 \{liu, kqliu, qzhao\}@ucdavis.edu}

\begin{document}
\date{}
\markboth{}{Liu and Liu and Zhao } \maketitle

\begin{abstract}
We consider the restless multi-armed bandit (RMAB) problem with
unknown dynamics in which a player chooses $M$ out of $N$ arms to
play at each time. The reward state of each arm transits according
to an unknown Markovian rule when it is played and evolves according
to an arbitrary unknown random process when it is passive. The
performance of an arm selection policy is measured by regret,
defined as the reward loss with respect to the case where the player
knows which $M$ arms are the most rewarding and always plays the $M$
best arms. We construct a policy with an interleaving exploration
and exploitation epoch structure that achieves a regret with
logarithmic order when arbitrary (but nontrivial) bounds on certain
system parameters are known. When no knowledge about the system is
available, we show that the proposed policy achieves a regret
arbitrarily close to the logarithmic order. We further extend the
problem to a decentralized setting where multiple distributed
players share the arms without information exchange. Under both an
exogenous restless model and an endogenous restless model, we show
that a decentralized extension of the proposed policy preserves the
logarithmic regret order as in the centralized setting.  The results
apply to adaptive learning in various dynamic systems and
communication networks, as well as financial investment.
\end{abstract}

\section{Introduction}\label{sec:introduction}

\subsection{Multi-Armed Bandit with i.i.d. and Rested Markovian Reward Models}
\footnotetext{This work was supported by the Army Research Office
under Grant W911NF-08-1-0467 and by the Army Research Lab under
Grant W911NF-09-D-0006. Part of these results were presented at ITA,
San Diego, CA, January 2011 and IEEE ICASSP, Prague, Czeck Republic,
May 2011.} In the classic multi-armed bandit (MAB) with an i.i.d.
reward model, there are $N$ independent arms and a single player.
Each arm, when played, offers an i.i.d. random reward drawn from a
distribution with unknown mean. At each time, the player chooses one
arm to play, aiming to maximize the total expected reward in the
long run. This problem involves the well-known tradeoff between
exploitation and exploration, where the player faces the conflicting
objectives of playing the arm with the best reward history and
playing a less explored arm to learn its reward statistics.

A commonly used performance measure of an arm selection policy is
the so-called \emph{regret} or \emph{the cost of learning,} defined
as the reward loss with respect to the case with a known reward
model. It is clear that under a known reward model, the player
should always play the arm with the largest reward mean. The essence
of the problem is thus to identify the best arm without engaging
other arms too often. Any policy with a sublinear growth rate of
regret achieves the same maximum average reward (given by the
largest reward mean) as in the known model case. However, the slower
the regret growth rate, the faster the convergence to this maximum
average reward, indicating a more effective learning ability of the
policy.

In~1985, Lai and Robbins showed that regret grows at least at a
logarithmic order with time, and an optimal policy was explicitly
constructed to achieve the minimum regret growth rate for several
reward distributions including Bernoulli, Poisson, Gaussian,
Laplace~\cite{Lai&Robbins85AAM}. Several other policies have been
developed under different assumptions on the reward
distribution~\cite{Agrawal:95,Auer:02}. In particular, an index
policy, referred to as Upper Confidence Bound 1 (UCB1) proposed by
Auer \etal in~\cite{Auer:02}, achieves logarithmic regret for any
reward distributions with finite support. In~\cite{qingzhaosub}, Liu
and Zhao proposed a policy that achieves the optimal logarithmic
regret order for a more general class of reward distributions and
sublinear regret orders for heavy-tailed reward distributions.

In 1987, Anantharam \etal extended Lai and Robbin's results to a
Markovian reward model where the reward state of each arm evolves as
an unknown Markov process over successive plays and remains frozen
when the arm is passive (the so-called \emph{rested} Markovian
reward model)~\cite{Anantharam:87-2}. In~\cite{Tekin:10}, Tekin and
Liu extended the UCB1 policy proposed in~\cite{Auer:02} to the
rested Markovian reward model.

\subsection{Restless Multi-Armed Bandit with Unknown Dynamics}

In this paper, we consider Restless Multi-Armed Bandit (RMAB), a
generalization of the classic MAB. In contrast to the rested
Markovian reward model, in RMAB, the state of each arm continues to
evolve even when it is not played. More specifically, the state of
each arm changes according to an unknown Markovian transition rule
when the arm is played and according to an arbitrary unknown random
process when the arm is not played. We consider both the centralized
(or equivalently, the single-player) setting and the decentralized
setting with multiple distributed players.

\subsubsection{Centralized Setting}

A centralized setting where $M$ players share their observations and
make arm selections jointly is equivalent to a single player who
chooses and plays $M$ arms simultaneously. The performance measure
regret is similarly defined: it is the reward loss compared to the
case when the player knows which arms are the most rewarding and
always plays the $M$ best arms.

Compared to the i.i.d. and the rested Markovian reward models, the
restless nature of arm state evolution requires that each arm be
played consecutively for a period of time in order to learn its
Markovian reward statistics. The length of each segment of
consecutive plays needs to be carefully controlled to avoid spending
too much time on a bad arm. At the same time, we experience a
transient  each time we switch out and then back to an arm, which
leads to potential reward loss compared to the steady-state behavior
of this arm. Thus, the frequency of arm switching needs to be
carefully bounded.

To balance these factors, we construct a policy based on a
deterministic sequencing of exploration and exploitation (DSEE) with
an epoch structure. Specifically, the proposed policy partitions the
time horizon into interleaving exploration and exploitation epochs
with carefully controlled epoch lengths. During an exploration
epoch, the player partitions the epoch into $N$ contiguous segments,
one for playing each of the $N$ arms to learn their reward
statistics. During an exploitation epoch, the player plays the arm
with the largest sample mean (\ie average reward per play)
calculated from the observations obtained so far. The lengths of
both the exploration and the exploitation epochs grow geometrically.
The number of arm switchings are thus at the logrithmic order with
time. The tradeoff between exploration and exploitation is balanced
by choosing the cardinality of the sequence of exploration epochs.
Specifically, we show that with an $O(\log t)$ cardinality of the
exploration epochs, sufficiently accurate learning of the arm ranks
can be achieved when arbitrary (but nontrivial) bounds on certain
system parameters are known, and the DSEE policy offers a
logarithmic regret order. When no knowledge about the system is
available, we can increase the cardinality of the exploration epochs
by an arbitrarily small order and achieve a regret arbitrarily close
to the logarithmic order, \ie the regret has order $f(t)\log t$ for
any increasing divergent function $f(t)$. In both cases, the
proposed policy achieves the maximum average reward offered by the
$M$ best arm.

We point out that the definition of regret here, while similar to
that used for the classic MAB, is a weaker version of its
counterpart. In the classic MAB with either i.i.d. or rested
Markovian rewards, the optimal policy under a known model is indeed
to stay with the best arm in terms of the reward mean\footnote{Under
the rested Markovian reward model, staying with the best arm (in
terms of the steady-state reward mean) is optimal up to a loss of
$O(1)$ term resulting from the transient effect of the initial state
which may not be the stationary distribution~\cite{Anantharam:87-2}.
This $O(1)$ term , however, does not affect the order of the
regret.}. For RMAB, however, the optimal policy under a known model
is no longer given by staying with the arm with the largest reward
mean. Unfortunately, even under known Markovian dynamics, RMAB has
been shown to be P-SPACE hard~\cite{Papadimitriou:99}. In this
paper, we adopt a weaker definition of regret. First introduced in
\cite{Auer-nonsto}, weak regret measures the performance of a policy
against a ``partially-informed'' genie who knows only which arm has
the largest reward mean instead of the complete system dynamics.
This definition of regret leads to a tractable problem, but at the
same time, weaker results. Whether stronger results for a general
RMAB under an unknown model can be obtained is still open for
exploration (see more discussions in Sec.~\ref{sec:relatedwork} on
related work).

\subsubsection{Decentralized Setting}

In the decentralized setting, there are $M$ distributed players. At
each time, a player chooses one arm to play based on its local
observations without information exchange with other players.
Collisions occur when multiple players choose the same arm and
result in reward loss. The objective here is a decentralized policy
to minimize the regret growth rate where regret is defined as the
performance loss with respect to the ideal case where the players
know the $M$ best arms and are perfectly orthogonalized among these
$M$ best arms through centralized scheduling.

 We consider two types of restless reward
models: the exogenous restless model and the endogenous restless
model. In the former, the system itself is rested: the state of an
arm does not change when the arm is not engaged. However, from each
individual player's perspective, arms are restless due to actions of
other players that are unobservable and uncontrollable. Under the
endogenous restless model, the state of an arm evolves according to
an arbitrary unknown random process even when the arm is not played.
Under both restless models, we extend the proposed DSEE policy to a
decentralized policy that achieves the same logarithmic regret order
as in the centralized scheduling. We emphasize that the logarithmic
regret order is achieved under a complete decentralization among
players. Players do not need to have synchronized global timing;
each player can construct the exploration and exploitation epoch
sequences according to its own local time.

We point out that the result under the exogenous restless model is
stronger than that under the endogenous restless model in the sense
that the regret is indeed defined with respect to the optimal policy
under a known reward model and centralized scheduling. This is
possible due to the inherent \emph{rested} nature of the systems
which makes any orthogonal sharing of the $M$ best arms optimal (up
to an $O(1)$ term) under a known reward model.

\subsection{Related Work on RMAB}\label{sec:relatedwork}
RMAB with unknown dynamics has not been studied in the literature
except two parallel independent investigations reported
in~\cite{Tekin:10-2} and \cite{QZ10}, both consider only a single
player. In~\cite{Tekin:10-2}, Tekin and Liu considered the same
problem and adopted the same definition of regret as in this paper.
They proposed a policy that achieves logarithmic (weak) regret when
certain knowledge about the system parameters is
available~\cite{Tekin:10-2}. Referred to as regenerative cycle
algorithm (RCA), the policy proposed in~\cite{Tekin:10-2} is based
on the UCB1 policy proposed in~\cite{Auer:02} for the i.i.d. reward
model. The basic idea of RCA is to play an arm consecutively for a
random number of times determined by a regenerative cycle of a
particular state and arms are selected based on the UCB1 index
calculated from observations obtained only inside the regenerative
cycles (observations obtained outside the regenerative cycles are
not used in learning). The i.i.d. nature of the regenerative cycles
reduces the problem to the classic MAB under the i.i.d. reward
model. The DSEE policy proposed in this paper, however, has a
deterministic epoch structure, and all observations are used in
learning. As shown in the simulation examples in
Sec.~\ref{sec:simulation}, DSEE can offer better performance than
RCA since RCA may have to discard a large number of observations
from learning before the chosen arm enters a regenerative cycle
defined by a particular pilot state. Note that when the arm reward
state space is large or when the chose pilot state has a small
stationary probability, it may take a long time for the arm to hit
the pilot state, and since the transition probabilities are unknown,
it is difficult to choose the pilot state for a smaller hitting
time. In~\cite{QZ10}, a strict definition of regret was adopted (\ie
the reward loss with respect to the optimal performance in the ideal
scenario with a known reward model). However, the problem can only
be solved for a special class of RMAB with $2$ or $3$ arms governed
by stochastically identical two-state Markov chains. For this
special RMAB, the problem is tractable due to the semi-universal
structure of the optimal policy of the corresponding RMAB with known
dynamics established in~\cite{QZTWC,CSZ}. By exploiting the simple
structure of the optimal policy under known Markovian dynamics, Dai
\etal showed in ~\cite{QZ10} that a regret with an order arbitrarily
close to logarithmic can be achieved for this special RMAB.

There are also several recent development on decentralized MAB with
multiple players under the i.i.d. reward model. In~\cite{Keqin:10},
Liu and Zhao proposed a Time Division Fair Sharing (TDFS) framework
which leads to a family of decentralized fair policies that achieve
logarithmic regret order under general reward distributions and
observation models~\cite{Keqin:10}. Under a Bernoulli reward model,
decentralized MAB was also addressed in \cite{Anima,Gaiyi}, where
the single-player policy UCB1 was extended to the multi-player
setting.

The basic idea of deterministic sequencing of exploration and
exploitation was first proposed in~\cite{qingzhaosub} under the
i.i.d. reward model. To handle the restless reward model, we
introduce the epoch structure with epoch lengths carefully chosen to
achieve the logarithmic regret order. The regret analysis also
requires different techniques as compared to the i.i.d. case.
Furthermore, the extension to the decentralized setting where
different players are not required to synchronize in their epoch
structures is highly nontrivial.

The results presented in this paper and the related work discussed
above are developed within the non-Bayesian framework of MAB in
which the unknowns in the reward models are treated as deterministic
quantities and the design objective is universally (over all
possible values of the unknowns) good policies. The other line of
development is within the Bayesian framework in which the unknowns
are modeled as random variables with known prior distributions and
the design objective is policies with good average performance
(averaged over the prior distributions of the unknowns). By treating
the posterior probabilistic knowledge (updated from the prior
distribution using past observations) about the unknowns as the
system state, Bellman in 1956 abstracted and generalized the classic
Bayesain MAB to a special class of Markov decision
processes~\cite{R1}. The long-standing Bayesian MAB was solved by
Gittins in 1970s where he established the optimality of an index
policy¡ªthe so-called Gittins index policy~\cite{R2}. In 1988,
Whittle generalized the classic Bayesian MAB to the restless MAB
(with known Markovian dynamics) and proposed an index policy based
on a Lagrangian relaxation~\cite{R3}. Weber and Weiss in 1990 showed
that Whittle index policy is asymptotically optimal under certain
conditions~\cite{R4, R5}. In the finite regime, the strong
performance of Whittle index policy has been demonstrated in
numerous examples (see, e.g., \cite{R6,R7,R8,R9}).

\subsection{Applications}
The restless multi-armed bandit problem has a broad range of
potential applications. For example, in a cognitive radio network
with dynamic spectrum access~\cite{QZCR}, a secondary user searches
among several channels for idle slots that are temporarily unused by
primary users. The state of each channel (busy or idle) can be
modeled as a two-state Markov chain with unknown dynamics. At each
time, a secondary user chooses one channel to sense and subsequently
transmit if the channel is found to be idle. The objective of the
secondary user is to maximize the long-term throughput by designing
an optimal channel selection policy without knowing the traffic
dynamics of the primary users. The decentralized formulation under
the endogenous restless model applies to a network of distributed
secondary users.

The results obtained in this paper also apply to opportunistic
communication in an unknown fading environment. Specifically, each
user senses the fading realization of a selected channel and chooses
its transmission power or data rate accordingly. The reward can be
defined to capture energy efficiency (for fixed-rate transmission)
or throughput. The objective is to design the optimal channel
selection policies under unknown fading dynamics. Similar problems
under known fading models have been considered
in~\cite{AVS,AVS1,AVS2}.

Another potential application is financial investment, where a
Venture Capital (VC) selects one company to invest each year. The
state (\eg annual profit) of each company evolves as a Markov chain
with the transition matrix depending on whether the company is
invested or not~\cite{VC}. The objective of the VC is to maximize
the long-run profit by designing the optimal investment strategy
without knowing the market dynamics \emph{a priori}. The case with
multiple VCs may fit into the decentralized formulation under the
exogenous restless model.

\subsection{Notations and Organization} For two positive integers $k$ and $l$,
define $k\oslash l\defeq ((k-1)~\mbox{mod}~l)+1$, which is an
integer taking values from $1,2,\cdots,l$.

The rest of the paper is organized as follows. In Sec.~\ref{sec:CT}
we consider the single-player setting. We propose the DSEE policy
and establish its logarithmic regret order. In Sec.~\ref{sec:DT}, we
consider the decentralized setting with multiple distributed
players. We present several simulation examples in
Sec.~\ref{sec:simulation} to compare the performance of DSEE with
the policy proposed in~\cite{Tekin:10}. Sec.~V concludes this paper.

\section{The Centralized Setting}\label{sec:CT}

In this section, we consider the centralized, or equivalently, the
single-player setting. We first present the problem formulation and
the definition of regret and then propose the DSEE policy and
establish its logarithmic regret order.

\subsection{Problem Formulation}\label{sec:problemformulation}

In the centralized setting, we have one player and $N$ independent
arms. At each time, the player chooses $M$ arms to play. Each arm,
when played, offers certain amount of reward that defines the
current state of the arm. Let $s_j(t)$ and $\mathcal{S}_j$ denote,
respectively, the state of arm $j$ at time $t$ and the state space
of arm $j$. When arm $j$ is played, its state changes according to
an unknown Markovian rule with $P_j$ as the transition matrix. The
transition matrixes are assumed to be irreducible, aperiodic, and
reversible. States of passive arms transit according to an arbitrary
unknown random process. Let
$\vec{\pi}_j=\{\pi_j(s)\}_{s\in\mathcal{S}_j}$ denote the stationary
distribution of arm $j$ under $P_j$. The stationary  reward mean
$\mu_j$ is given by $ \mu_j= \sum_{s \in \mathcal{S}_j} s \pi_j(s)$.
Let $\sigma$ be a permutation of $\{1, \cdots, N\}$ such that
\[
\mu_{\sigma(1)} \geq \mu_{\sigma(2)} \geq \mu_{\sigma(3)} \geq
\cdots \geq \mu_{\sigma(N)}.
\]

A policy $\Phi$ is a rule that specifies an arm to play based on the
observation history. Let $t_j(n)$ denote the time index of the $n$th
play on arm $j$, and $T_j(t)$ the total number of plays on arm $j$
by time $t$. Notice that both $t_j(n)$ and $T_j(t)$ are random
variables with distributions determined by the policy $\Phi$.  The
total reward under $\Phi$ by time $t$ is given by
\begin{eqnarray}\label{eqn:reward1}
R(t) = \sum_{j=1}^{N}\sum_{n=1}^{T_j(t)}s_j(t_j(n)).
\end{eqnarray}

The performance of a policy $\Phi$ is measured by regret $r_{\Phi}
(t)$ defined as the reward loss with respect to the best possible
single-arm policy:
\begin{eqnarray} \label{eqn:regret}
r_\Phi(t) = t \sum_{i=1}^M \mu_{\sigma(i)} -
\mbbE_{\Phi}\left[R(t)\right] + O(1),
\end{eqnarray}
where the $O(1)$ constant term is caused by the transient effect of
playing the $M$ best arms when their initial states are not given by
the stationary distribution, $\mbbE_{\Phi}$ denotes the expectation
with respect to the random process induced by policy $\Phi$. The
objective is to minimize the growth rate of the regret with time
$t$. Note that the constant term does not affect the order of the
regret and will be omitted in the regret analysis in subsequent
sections.

In the remaining of this section, we will consider first $M=1$.
Extensions to the general case are given in Sec.~\ref{sec:II-D}.

\subsection{DSEE with An Epoch Structure}\label{sec:problemformulation}

 Compared to the i.i.d. and the rested Markovian reward models, the
restless nature of arm state evolution requires that each arm be
played consecutively for a period of time in order to learn its
Markovian reward statistics and to approach the steady state. The
length of each segment of consecutive plays needs to be carefully
controlled: it should be short enough to avoid spending too much
time on a bad arm and, at the same time, long enough to limit the
transient effect. To balance these factors, we construct a policy
based on DSEE with an epoch structure. As illustrated in
Fig.~\ref{fig:SingleUserstructure}, the proposed policy partitions
the time horizon into interleaving exploration and exploitation
epochs with geometrically growing epoch lengths. In the exploitation
epochs, the player computes the sample mean (\ie average reward per
play) of each arm based on the observations obtained so far and
plays the arm with the largest sample mean, which can be considered
as the current estimated best arm. In the exploration epochs, the
player aims to learn the reward statistics of all arms by playing
them equally many times.
 The purpose of the exploration epochs is to make decisions in the
exploitation epochs sufficiently accurate.

 As illustrated in Fig.
\ref{fig:SingleUserstructure}, in the $n$th exploration epoch, the
player plays every arm $4^{n-1}$ times. In the $n$th exploitation
epoch with length $2\times 4^{n-1}$, the player plays the arm with
the largest sample mean (denoted as arm $a^*$) determined at the
beginning of this epoch. At the end of each epoch, whether to start
an exploitation epoch or an exploration epoch is determined by
whether sufficiently many (specifically, $D\log t$ as given in
(\ref{eqn:singleplayercondition}) in
Fig.~\ref{fig:singleplayerpolicy}) observations have been obtained
from every arm in the exploration epochs. This condition ensures
that only logarithmically many plays are spent in the exploration
epochs, which is necessary for achieving  the logarithmic regret
order. This also implies that the exploration epochs are much less
frequent than the exploitation epochs. Though the exploration epochs
can be understood as the ``information gathering'' phase, and the
exploitation epochs as the ``information utilization'' phase,
observations obtained in the exploitation epochs are also used in
learning the arm reward statistics.  A complete description of the
proposed policy is given in Fig.~\ref{fig:singleplayerpolicy}.

\begin{figure}[h]
\centerline{
\begin{psfrags}
\psfrag{e}[c]{\scriptsize  Arm $1$($4^{n}$ times)}
\psfrag{ex}[c]{\scriptsize Exploration}\psfrag{ep}[c]{\scriptsize
 Exploitation} \psfrag{ ep}[c]{\scriptsize Exploitation}
\psfrag{b}[c]{\scriptsize Play the best arm $a^*$ ($2\times 4^n$
times)}  \psfrag{f}[c]{\scriptsize $\cdots$}
\psfrag{g}[c]{\scriptsize Arm $N$($4^{n}$ times)}
\scalefig{0.8}\epsfbox{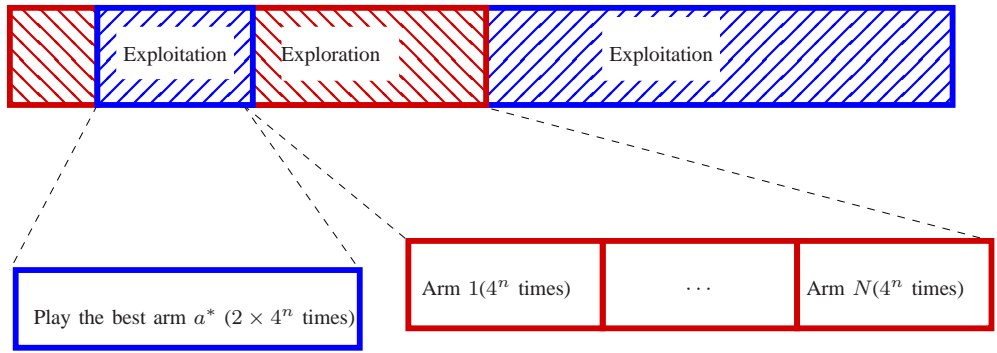} \psfrag{Two types}[c]{\scriptsize
Two types of epochs are not always interleaving.}
\end{psfrags}}
\caption{The epoch structure with geometrically growing epoch
lengths.} \label{fig:SingleUserstructure}
\end{figure}

\begin{figure}[htbp]
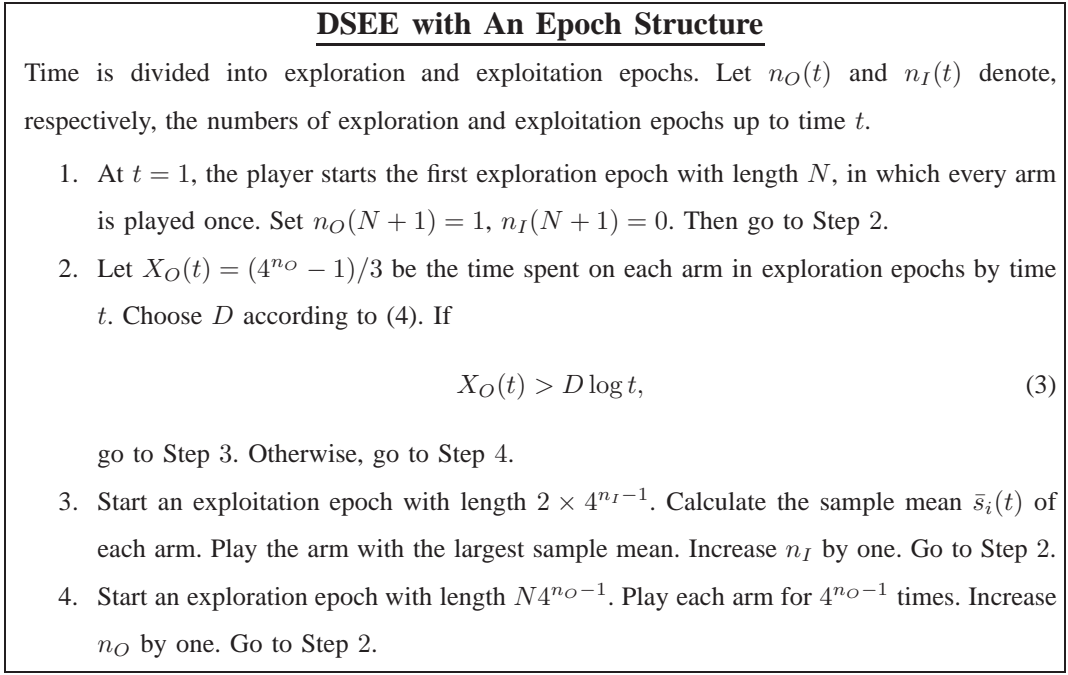

\begin{center}
\noindent\fbox{
\parbox{5.4in}
{ \centerline{\underline{{\bf DSEE with An Epoch Structure}}}
{\small Time is divided into exploration and exploitation epochs.
Let $n_O(t)$ and $n_I(t)$ denote, respectively, the numbers of
exploration and exploitation epochs up to time $t$.
\begin{enumerate}
\item[1.] At $t=1$, the player starts the first exploration epoch with length $N$, in which every arm is played once.  Set $n_O(N+1) = 1$, $n_I(N+1) = 0$. Then go to Step $2$.
\item[2.] Let $X_O(t) = (4^{n_O}-1)/3$ be the time spent on each arm in exploration epochs by time
$t$. Choose $D$ according to (\ref{eqn:conditionD1}). If
\begin{eqnarray}\label{eqn:singleplayercondition}
X_O(t)
> D \log t ,
\end{eqnarray} go to Step $3$. Otherwise, go to
Step $4$.
\item[3.] Start an exploitation epoch with length $2\times 4^{n_I - 1}$. Calculate the sample mean  $\bar{s}_i(t)$ of each arm.  Play the arm with the largest
sample mean. Increase $n_I$ by one. Go to Step $2$.
\item[4.] Start an exploration epoch with length $N 4^{n_O-1}$. Play each arm  for $4^{n_O-1}$ times. Increase $n_O$ by one. Go to Step $2$.
\end{enumerate}}
}} \caption{DSEE with an epoch structure for
RMAB.}\label{fig:singleplayerpolicy}
\end{center}
\end{figure}

\subsection{Regret Analysis} \label{sec:Performance}

In this section, we show that  the proposed policy achieves a
logarithmic regret order. This is given in the following theorem.
\begin{theorem}\label{thm:singleuser}
Assume that $\{P_i\}_{i=1}^N$ are finite state, irreducible,
aperiodic and reversible. All the reward states  are non-negative.
Let $\epsilon_i$ be the second largest eigenvalue of $P_i$. Define
$\epsilon_{\min} = \min_{1\leq i \leq N} \epsilon_i$, $\pi_{\min} =
\min_{ 1\leq i \leq N, s\in \mathcal{S}_i} \pi_i(s)$,
 $r_{\max} =
\max_{1 \leq i \leq N} \sum_{s \in \mathcal{S}_i} s$,
$|\mathcal{S}|_{\max} = \max_{1\leq i \leq N} |\mathcal{S}_i|$,
$A_{\max} = \max_i(\min_{s\in \mathcal{S}_i} \pi^i_s)^{-1}\sum_{s\in
\mathcal{S}_i}s$, and $L = \frac{30 r_{\max}^2 }{( 3 -
2\sqrt{2})\epsilon_{\min}}$. Assume that the best arm has a distinct
reward mean\footnote{The extension to the general case is
straightforward}. Set the policy parameters $D$ to satisfy the
following condition:
\begin{eqnarray}\label{eqn:conditionD1}
 D & \geq & \frac{4L}{(\mu_{\sigma(1)} - \mu_{\sigma(2)})^2},
\end{eqnarray}
 The regret of DSEE at the end of any epoch can be upper
bounded by
\begin{eqnarray}\label{eqn:totalregretsingle} \nonumber
r_{\Phi}(t)& \leq & C_1 \lceil \log_4 (\frac{3}{2} (t- N) +1) \rceil
 + C_2 [4(3D \log t +1)-1] \\
&&+ N A_{\max}(\lfloor \log _4 (3D \log t +1) \rfloor + 1) ),
\end{eqnarray}
where
\begin{eqnarray}
C_1 & = &  \left( A_{\max}+ 3 \sum_{j= 2}^N \frac{ \mu_{\sigma(1)} -
\mu_{\sigma(j)} }{ \pi_{\min} }\sum_{ k = 1,j} \left( \frac{1}{\log
2} + \frac{\sqrt{2}\epsilon_k \sqrt{L }}{10 \sum_{s \in {S}_k} s}
\right) |\mathcal{S}_k| \right),\\
 C_2 &  = & \frac{1}{3}\left(N \mu_{\sigma(1)}
- \sum_{i=1}^{N}\mu_{\sigma(i)}\right).
\end{eqnarray}
\end{theorem}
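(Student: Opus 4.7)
The plan is to decompose $r_\Phi(t)$ into three contributions and bound each separately: (i) the reward loss incurred during exploration epochs, (ii) the reward loss incurred during exploitation epochs in which the arm with the largest sample mean is not $\sigma(1)$, and (iii) the transient loss arising at each arm switch because the arm's initial state is not distributed according to its stationary distribution. The three terms in (\ref{eqn:totalregretsingle}) correspond exactly to these contributions after reorganization.

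The exploration term is the easiest. By Step~2 of the policy, a new exploration epoch is launched only while $X_O(t)\le D\log t$, and the epoch lengths grow by a factor of $4$, so the total number of plays spent in exploration across all $N$ arms by time $t$ is at most $N\cdot\tfrac{1}{3}[4(3D\log t+1)-1]$. Replacing each such play by a play of arm $\sigma(1)$ would cost at most $\mu_{\sigma(1)}-\mu_{\sigma(i)}$ per play of arm $i$, so summing over arms yields $C_2\,[4(3D\log t+1)-1]$ with $C_2=\tfrac{1}{3}(N\mu_{\sigma(1)}-\sum_i\mu_{\sigma(i)})$. The transient term is almost as direct: each entry into an arm (whether at the start of an exploration segment or of an exploitation epoch) incurs at most $A_{\max}$ of transient loss, a standard bound for a reversible chain whose initial state is arbitrary but whose stationary probabilities are all $\ge\pi_{\min}$. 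The geometric growth of epoch lengths implies at most $\lceil\log_4(\tfrac{3}{2}(t-N)+1)\rceil$ epochs and $N(\lfloor\log_4(3D\log t+1)\rfloor+1)$ exploration segments by time $t$, which produces the first and third terms of (\ref{eqn:totalregretsingle}).

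The core technical step is controlling (ii): showing that the probability of picking a wrong arm at the start of an exploitation epoch is small enough that the regret from all exploitation epochs telescopes into an $O(\log t)$ bound. The key tool is a Chernoff-type large-deviation inequality for a finite, irreducible, reversible Markov chain, of the form $\Pr(|\bar{s}_i^{(n)}-\mu_i|>\gamma)\le c\,|\mathcal{S}_i|\exp(-n\gamma^2(3-2\sqrt{2})\epsilon_i/(15 r_{\max}^2))$; this is where the constant $L=30r_{\max}^2/((3-2\sqrt2)\epsilon_{\min})$ and the factors $|\mathcal{S}_k|$ and $\sqrt{L}$ appearing in $C_1$ originate. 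Applying it with $\gamma=(\mu_{\sigma(1)}-\mu_{\sigma(2)})/2$ to each arm's accumulated exploration plays of size $X_O(t)\ge D\log t$, condition (\ref{eqn:conditionD1}) forces the tail probability on each arm to be $O(t^{-2})$. A union bound over the (at most) $N$ arms and over the exploitation epochs, whose lengths grow geometrically, yields a contribution that, after multiplying by the per-play gap $\mu_{\sigma(1)}-\mu_{\sigma(j)}$, sums to the $C_1\lceil\log_4(\cdot)\rceil$ term of (\ref{eqn:totalregretsingle}).

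The main obstacle is that the sample mean used in step (ii) is computed from observations that are \emph{not} drawn from a single Markov trajectory: they consist of several consecutive-play segments (one per exploration epoch, plus occasional exploitation plays of that arm) concatenated together, and the arm's state between segments evolves according to an arbitrary, unknown passive process. This breaks any naive application of a chain-wide Chernoff bound. I would handle it by applying the reversible-chain Chernoff inequality segment-by-segment, treating the initial state of each segment as adversarial and absorbing the dependence on that state into the $|\mathcal{S}_k|$ prefactor, then combining the per-segment tail bounds by a standard union argument together with the fact that the geometrically increasing segment lengths make the dominant segment contribute the bulk of the exponent. Plugging the three bounds back into the decomposition of $r_\Phi(t)$ yields (\ref{eqn:totalregretsingle}).
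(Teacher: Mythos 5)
Your overall plan coincides with the paper's proof of Theorem~\ref{thm:singleuser}: the same three-way decomposition (exploration loss giving $C_2$, exploitation mistakes giving the $C_1$ term, per-switch transient loss bounded by $A_{\max}$ via Anantharam's lemma), the same epoch counting, the same gap condition on $D$, and the same key tool (Gillman's Chernoff bound for reversible chains). You also correctly identified the crux: the sample mean of an arm pools observations from noncontiguous segments of its Markov trajectory, so the Chernoff bound cannot be applied to the pooled $w_i$ plays as if they were one run of the chain. The gap is in how you close exactly this step. You propose applying the bound segment-by-segment and ``combining the per-segment tail bounds by a standard union argument together with the fact that the geometrically increasing segment lengths make the dominant segment contribute the bulk of the exponent.'' That mechanism fails: a union bound requires \emph{every} segment's tail to be small, not just the dominant one's, because the total deviation $w_i\gamma$ can be produced entirely by the short segments (a segment of length $O(1)$ can deviate by $\Theta(r_{\max})$ per play, and the non-dominant segments together can account for up to about half of $w_i$). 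Concretely, if you allocate the deviation threshold across segments proportionally to segment length, a segment of length $2^{k_1}=O(1)$ receives exponent of order $2^{k_1}\gamma^2=O(1)$, so its tail probability is not small; if you allocate equally over the $K$ segments, the longest segment's exponent degrades to order $w_i\gamma^2/K^2=O(\log t_n/\log^2 t_n)\to 0$. Either way the union bound does not deliver the $t_n^{-1}$ mistake probability needed to make each exploitation epoch (of length $\Theta(t_n)$) contribute $O(1)$ regret.

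The missing idea, which is the actual content of the paper's Appendix~A, is the allocation proportional to the \emph{square root} of segment length. Because the epoch lengths grow geometrically, the segments on any arm have lengths that are distinct powers of two (exploration segments have lengths $4^{m-1}$, exploitation segments $2\cdot 4^{m-1}$), which gives the deterministic inequality $\sqrt{w_i}=\sqrt{\textstyle\sum_l 2^{k_l}}\geq(\sqrt{2}-1)\sum_l\sqrt{2^{k_l}}$. Hence the event $\bigl\{\sum_l R_i(l)-\mu_i w_i\geq \sqrt{L w_i\log t_n}\bigr\}$ forces some single segment to satisfy $R_i(l)-\mu_i 2^{k_l}\geq(\sqrt{2}-1)\sqrt{L\log t_n}\,\sqrt{2^{k_l}}$. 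With this split, Gillman's bound gives every segment the \emph{same} length-independent exponent, at least $\tfrac{3}{2}\log t_n$ under the definition of $L$ (the factor $(\sqrt{2}-1)^2=3-2\sqrt{2}$ in $L$ exists precisely for this), so the union over $K\leq\log_2 t_n$ segments costs only a logarithmic prefactor, absorbed by the slack in the exponent to yield the $t_n^{-1}$ bound in (\ref{eqn:upperboundij}). This square-root allocation, justified by the distinct-powers-of-two structure, is what your sketch lacks; without it the segment-wise plan does not assemble into (\ref{eqn:totalregretsingle}).
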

\begin{proof}
\vspace{1.0em}
 See Appendix~A for details.
\end{proof}

 In the proposed policy, to ensure the logarithmic regret
order, the policy parameter $D$ needs to be chosen appropriately.
This requires an arbitrary (but nontrivial) bound on $r_{\max}$,
$\epsilon_{\min}$, and $\mu_{\sigma(1)} - \mu_{\sigma(2)}$. In the
case where no knowledge about the system is available, $D$ can be
chosen to increase with time rather than set \emph{a priori} to
achieve a regret order arbitrarily close to logarithmic. This is
formally stated in the following theorem.

\begin{theorem}\label{thm:singleplayerarbitraryclose}
Assume that $\{P_i\}_{i=1}^N$ are finite state, irreducible,
aperiodic and reversible. All the reward states  are non-negative.
For any increasing sequence $f(t)$ ($f(t) \rightarrow \infty $ as $t
\rightarrow \infty$), if $D(t) = f(t)$, then
\begin{eqnarray}
r_{\Phi}(t) \sim O(f(t)\log t).
\end{eqnarray}
\end{theorem}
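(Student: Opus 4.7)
The plan is to reduce Theorem~2 to Theorem~1 by observing that any divergent $f(t)$ eventually exceeds the (unknown but fixed) threshold $4L/(\mu_{\sigma(1)}-\mu_{\sigma(2)})^2$ required in (\ref{eqn:conditionD1}). Let $D^*=4L/(\mu_{\sigma(1)}-\mu_{\sigma(2)})^2$ and pick $t_0$ so large that $f(t)\ge D^*$ for all $t\ge t_0$. The regret accumulated before $t_0$ is a constant depending only on the (unknown) system parameters, so it contributes $O(1)$ to $r_\Phi(t)$ and can be absorbed. The real work is to redo the bookkeeping of Theorem~1 with the constant $D$ replaced by the nondecreasing schedule $D(t)=f(t)$, and show that each of the three terms on the right-hand side of (\ref{eqn:totalregretsingle}) only grows by the extra factor $f(t)$.

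First I would bound the exploration cost. By the policy's stopping rule (\ref{eqn:singleplayercondition}), at any time $t$ the per-arm exploration time $X_O(t)$ is at most the smallest $(4^{n_O}-1)/3$ exceeding $D(t)\log t=f(t)\log t$; because epoch lengths grow by a factor of $4$, this quantity is at most $4f(t)\log t+O(1)$. Summed over $N$ arms and multiplied by the per-step reward gap, the total exploration regret is $O(f(t)\log t)$, exactly the $C_2$ term in (\ref{eqn:totalregretsingle}) with $D\mapsto f(t)$.

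Next I would handle the exploitation regret. The core estimate underlying Theorem~1 is a concentration inequality for sample means of a finite, irreducible, aperiodic, reversible Markov chain: after $k$ observations, the probability that $|\bar s_i-\mu_i|$ exceeds the half-gap $(\mu_{\sigma(1)}-\mu_{\sigma(2)})/2$ decays like $|\mathcal{S}_i|\exp(-k(\mu_{\sigma(1)}-\mu_{\sigma(2)})^2/(4L))$. In Theorem~1 this is set equal to $t^{-2}$ by choosing $k=D\log t$ with $D\ge D^*$. For Theorem~2, whenever an exploitation epoch begins at some time $\tau\ge t_0$, the per-arm sample count is at least $f(\tau)\log\tau$, so the same exponential tail yields a probability of at most $|\mathcal{S}|_{\max}\tau^{-f(\tau)/D^*}\le |\mathcal{S}|_{\max}\tau^{-1}$ (and eventually $o(\tau^{-1})$). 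Summing the contribution of each exploitation epoch (whose length is at most twice the current time) over the $O(\log t)$ exploitation epochs up to $t$ gives a bound of $O(\log t)$, i.e.\ no worse than in Theorem~1.

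Assembling these pieces yields $r_\Phi(t)=O(f(t)\log t)+O(\log t)+O(\log\log t)=O(f(t)\log t)$, as claimed. The main obstacle I anticipate is the bookkeeping around the time-varying threshold: because $D(t)$ changes while $X_O(t)$ only jumps at epoch endpoints, one has to verify that the exploration rule still keeps $X_O(t)$ within a constant factor of $f(t)\log t$, and that whenever an exploitation epoch begins the sample size is indeed at least $f(t)\log t$ rather than $f$ evaluated at some earlier time. Both are straightforward from the monotonicity of $f$ and the geometric epoch lengths, but they are exactly the places where the argument of Theorem~1 must be rewritten rather than invoked as a black box.
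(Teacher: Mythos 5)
Your proposal is correct, and its skeleton matches the paper's (Appendix~B): both decompose the regret into switching/transient, exploration, and exploitation parts; both observe that the dominant term is the exploration cost $O(D(t)\log t)=O(f(t)\log t)$; and both use an ``eventually the parameter condition holds'' argument, absorbing the regret accumulated before that finite time into a constant. The one genuine difference is how the exploitation term is handled. You keep the analysis constant $L$ fixed at its Theorem~\ref{thm:singleuser} value, note that the divergent $f(t)$ eventually exceeds the fixed threshold $D^*=4L/(\mu_{\sigma(1)}-\mu_{\sigma(2)})^2$, and then invoke the Theorem~\ref{thm:singleuser} machinery verbatim after $t_0$, which yields an $O(\log t)$ exploitation regret --- lower order than $f(t)\log t$, hence sufficient. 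The paper instead introduces an auxiliary \emph{growing} sequence $L(t)\to\infty$ with $D(t)/L(t)\to\infty$ and imposes a strictly stronger constant ($70$ in place of $30$ in the definition of $L$), which drives the per-epoch mistake probability below $t_n^{-1}$ fast enough that the post-threshold exploitation regret is bounded by a constant independent of $t$. Your route is the more economical one: it shows the growing $L(t)$ is unnecessary for the stated conclusion, and it avoids re-deriving the concentration bound with a time-varying constant; the paper's route buys a slightly stronger intermediate claim (constant rather than logarithmic exploitation loss) that the theorem does not actually need. Two small points to be careful about if you write this up fully: first, your shorthand concentration bound $|\mathcal{S}_i|\exp(-k(\mu_{\sigma(1)}-\mu_{\sigma(2)})^2/(4L))$ is not literally the paper's estimate --- the paper's threshold $C_{t,w}=\sqrt{L\log t/w}$ shrinks with the sample count so that the probability is pinned at order $t_n^{-1}$, and the restless structure forces the union bound over the $K\le \log_2 t_n$ noncontiguous segments with the $\sqrt{2^{k_l}}$ allocation --- so you must route your fixed-half-gap version through that same segment decomposition rather than cite a generic i.i.d.-style tail; second, your closing observation about monotonicity (the condition $X_O(\tau)>f(\tau)\log\tau$ is checked at the epoch start $\tau$, and $f(\tau)\log\tau\le f(t)\log t$ for $\tau\le t$) is indeed exactly what makes both the exploration upper bound and the sample-count lower bound work with a time-varying $D(t)$, and it deserves to be stated explicitly as you anticipate.
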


\begin{proof}
See Appendix~B for details.
\end{proof}

\subsection{Extension to $M>1$} \label{sec:II-D}

For $M>1$, the basic structure of DSEE is the same. The only
difference is that in the $n$th exploitation epoch with length
$2\times 4^{n-1}$, the player plays the arms with the $M$ largest
sample means; in the $n$th exploration epoch with length  $ \lceil
\frac{N}{M} \rceil 4^{n-1}$  the player spends $4^{n-1} $ plays on
each arm and gives up $\left( M\lceil \frac{N}{M}  \rceil-N \right)
4^{n-1} $ plays. The  regret in this case is given in the following
theorem.

\begin{theorem}\label{thm:singleuserM}
Assume that $\{P_i\}_{i=1}^N$ are finite state, irreducible,
aperiodic and reversible. All the reward states  are non-negative.
Let $\epsilon_i$ be the second largest eigenvalue of $P_i$. Define
$\epsilon_{\min} = \min_{1\leq i \leq N} \epsilon_i$, $\pi_{\min} =
\min_{ 1\leq i \leq N, s\in \mathcal{S}_i} \pi_i(s)$,
 $r_{\max} =
\max_{1 \leq i \leq N} \sum_{s \in \mathcal{S}_i} s$,
$|\mathcal{S}|_{\max} = \max_{1\leq i \leq N} |\mathcal{S}_i|$,
$A_{\max} = \max_i(\min_{s\in \mathcal{S}_i} \pi^i_s)^{-1}\sum_{s\in
\mathcal{S}_i}s$, and $L = \frac{30 r_{\max}^2 }{( 3 -
2\sqrt{2})\epsilon_{\min}}$. Assume that the $M$th best arm arm has
a distinct reward mean\footnote{The extension to the general case is
straightforward}. Set the policy parameters $D$ to satisfy the
following condition:
\begin{eqnarray}\label{eqn:conditionD1M}
 D & \geq & \frac{4L}{(\mu_{\sigma(M)} - \mu_{\sigma(M+1)})^2},
\end{eqnarray}
 The regret of DSEE at the end of any epoch can be upper
bounded by
\begin{eqnarray}\label{eqn:totalregretsingleM} \nonumber
r_{\Phi}(t)& \leq & C_1 \lceil \log_4 (\frac{3}{2} (t- N) +1) \rceil
 + C_2 [4(3D \log t +1)-1] \\
&&+ N A_{\max}(\lfloor \log _4 (3D \log t +1) \rfloor + 1) ,
\end{eqnarray}
where
\begin{eqnarray}
C_1 & = & M  A_{\max}+ \frac{3}{ \pi_{\min}}\sum_{j = 1}^M \sum_{i=
M+1}^N( \mu_{\sigma(j)} - \mu_{\sigma(i)} )\sum_{ k = j,i} \left(
\frac{1}{\log 2} + \frac{\sqrt{2}\epsilon_k \sqrt{L }}{10 \sum_{s
\in {S}_k} s} \right)
|\mathcal{S}_k|,\\
 C_2 &  = & \frac{1}{3} \left( \lceil \frac{N}{M} \rceil
\sum_{i = 1}^M \mu_{\sigma(i)} -
\sum_{i=1}^{N}\mu_{\sigma(i)}\right).
\end{eqnarray}
\end{theorem}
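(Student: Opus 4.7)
My plan is to mirror the three-part regret decomposition used in the proof of Theorem~\ref{thm:singleuser}, adjusting only where the new exploration-epoch length $\lceil N/M\rceil 4^{n-1}$ and the new ``play the $M$ largest sample means'' exploitation rule force changes. I would write
\[
r_{\Phi}(t)\;=\;R_{\text{exp}}(t)\;+\;R_{\text{wrong}}(t)\;+\;R_{\text{trans}}(t),
\]
where $R_{\text{exp}}(t)$ is the loss on exploration epochs (including the $(M\lceil N/M\rceil-N)4^{n-1}$ wasted slots per epoch), $R_{\text{wrong}}(t)$ is the loss on exploitation epochs in which the selected $M$-subset differs from $\{\sigma(1),\dots,\sigma(M)\}$, and $R_{\text{trans}}(t)$ is the aggregate $O(1)$ transient loss paid each time a Markov chain is restarted from a non-stationary state.

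The exploration piece is a direct geometric-series computation: the triggering rule enforces $X_O(t)\le D\log t$ at the start of every exploration epoch, so the total per-arm exploration play count is bounded by $4(3D\log t+1)-1$, and multiplying by the per-slot benchmark loss $\sum_{i=1}^M\mu_{\sigma(i)}-\sum_{i=1}^N\mu_{\sigma(i)}/\lceil N/M\rceil$ produces the $C_2$ factor in (\ref{eqn:totalregretsingleM}). The transient piece is equally routine: the geometric epoch growth makes the total number of exploration epochs $\lfloor\log_4(3D\log t+1)\rfloor+1$ and the total number of exploitation epochs $\lceil\log_4(\tfrac{3}{2}(t-N)+1)\rceil$, each epoch restarts at most $M$ arms from possibly non-stationary initial states, and reversibility bounds the per-restart loss by $A_{\max}$. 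This yields both the final $NA_{\max}(\lfloor\log_4(3D\log t+1)\rfloor+1)$ term and the $MA_{\max}$ summand absorbed into $C_1$.

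The only real obstacle is $R_{\text{wrong}}(t)$, where the single-best-arm argument has to be generalized to a top-$M$ identification problem. An exploitation epoch is wrong iff there exist indices $j\le M<i$ with $\bar s_{\sigma(i)}\ge\bar s_{\sigma(j)}$, and a union bound forces $|\bar s_{\sigma(k)}-\mu_{\sigma(k)}|\ge(\mu_{\sigma(j)}-\mu_{\sigma(i)})/2$ for at least one $k\in\{j,i\}$. Since any such gap satisfies $\mu_{\sigma(j)}-\mu_{\sigma(i)}\ge\mu_{\sigma(M)}-\mu_{\sigma(M+1)}$, condition (\ref{eqn:conditionD1M}) is calibrated precisely so that the Chernoff-type concentration bound for reversible finite-state Markov chains used in Theorem~\ref{thm:singleuser} (whose constant feeds into $L$) makes the per-pair failure probability summable, once multiplied by the per-epoch exploitation length $2\cdot 4^{n-1}$ and the per-slot cost $\mu_{\sigma(j)}-\mu_{\sigma(i)}$. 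Summing over the $M(N-M)$ pairs $(j,i)$ then recovers exactly the double sum appearing in $C_1$, with the $1/\pi_{\min}$ prefactor and the $(1/\log 2+\sqrt{2}\epsilon_k\sqrt{L}/\ldots)|\mathcal{S}_k|$ factor absorbing the initial-state correction and the two-term Chernoff tail respectively.

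One subtlety worth verifying is that the sample mean $\bar s_{\sigma(k)}$ used in each exploitation epoch is computed from \emph{all} prior observations of arm $\sigma(k)$, so the effective sample size is at least $X_O>D\log t$ at every exploitation epoch by the triggering rule; this is the lower bound that drives the concentration step. Once this is in place, the remainder is bookkeeping: in the Theorem~\ref{thm:singleuser} proof, replace ``the unique best arm'' by ``any $j\le M$'' and ``any suboptimal arm'' by ``any $i\ge M+1$'', collect the three contributions, and the bound (\ref{eqn:totalregretsingleM}) with the stated $C_1, C_2$ drops out.
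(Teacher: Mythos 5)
Your proposal follows essentially the same route as the paper's own proof (Appendix~C): the same decomposition into transient loss (Lemma~1 plus epoch counting, giving the $M A_{\max}$ and $N A_{\max}$ terms), exploration loss (per-arm exploration time of order $D\log t$), and exploitation mistakes bounded by reusing the pairwise misranking probability $\Pr[i,j,n]$ from Theorem~\ref{thm:singleuser} — including its segment-wise Chernoff argument for non-contiguous observations — with the gap replaced by $\mu_{\sigma(M)}-\mu_{\sigma(M+1)}$ and a union over the $M(N-M)$ pairs $(j,i)$, $j\le M<i$. The only blemish is bookkeeping in the exploration term: the per-arm play count is $\frac{1}{3}[4(3D\log t +1)-1]$ and should multiply the per-play loss $\lceil \frac{N}{M}\rceil\sum_{i=1}^{M}\mu_{\sigma(i)}-\sum_{i=1}^{N}\mu_{\sigma(i)}$ (equivalently, the per-slot loss times the $\lceil \frac{N}{M}\rceil T_O$ exploration slots), which yields exactly $C_2[4(3D\log t+1)-1]$, whereas your product of the per-slot loss with the per-arm count is off by a factor of $3/\lceil \frac{N}{M}\rceil$ — harmless for the order, but worth fixing to recover the stated constant.
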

\begin{proof}
\vspace{2.0em}
 See Appendix~C for details.
\end{proof}

Regret at any time $t$ has a upper bound with a logarithmic order
similar to (\ref{eqn:totalregretsingleM}), with $t$ replaced by
$4t+3$. In the proposed policy, to ensure the logarithmic regret
order, the policy parameter $D$ needs to be chosen appropriately.
This requires an arbitrary (but nontrivial) bound on $r_{\max}$,
$\epsilon_{\min}$, and $\mu_{\sigma(M)} - \mu_{\sigma(M+1)}$. In the
case where no knowledge about the system is available, $D$ can be
chosen to increase with time rather than set \emph{a priori} to
achieve a regret order arbitrarily close to logarithmic. This is
formally stated in the following theorem.

\begin{theorem}\label{thm:singleplayerarbitrarycloseM}
Assume that $\{P_i\}_{i=1}^N$ are finite state, irreducible,
aperiodic and reversible. All the reward states  are non-negative.
For any increasing sequence $f(t)$ ($f(t) \rightarrow \infty $ as $t
\rightarrow \infty$), if $D(t) = f(t)$, then
\begin{eqnarray}
r_{\Phi}(t) \sim O(f(t)\log t).
\end{eqnarray}
\end{theorem}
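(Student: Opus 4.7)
The proof strategy is a direct extension of the argument used for Theorem~\ref{thm:singleplayerarbitraryclose}, adapted to the $M>1$ setting covered by Theorem~\ref{thm:singleuserM}. The key observation is that, although the required lower bound on $D$ in condition~(\ref{eqn:conditionD1M}) depends on the unknown quantities $L$ and $\mu_{\sigma(M)}-\mu_{\sigma(M+1)}$, we have $f(t)\to\infty$ by assumption, so the time-varying policy parameter $D(t)=f(t)$ is eventually at least the threshold $D^{*} = 4L/(\mu_{\sigma(M)}-\mu_{\sigma(M+1)})^{2}$. Let $t_{0}$ denote the smallest deterministic time at which $f(t)\geq D^{*}$ for all $t\geq t_{0}$; such a $t_{0}$ exists and is finite, depending only on (unknown but fixed) system parameters.

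For $t<t_{0}$, the regret is trivially bounded by $M\mu_{\sigma(1)}t_{0}$, which is a constant and may be absorbed into the claimed order. For $t\geq t_{0}$ I would re-run the decomposition used in Appendix~C, splitting the regret into three sources: (a) plays occurring inside exploration epochs, (b) plays occurring inside exploitation epochs during which the $M$ largest sample means do not coincide with the true $M$ best arms, and (c) the transient/initial-state overhead incurred each time the player switches between epochs. The stopping rule $X_{O}(t)>f(t)\log t$ together with the geometric epoch lengths yields $X_{O}(t)=O(f(t)\log t)$, so source (a) contributes $O(f(t)\log t)$. Source (c) is $O(\log t)$ because, under the geometric epoch structure, the number of epochs up to time $t$ grows only logarithmically.

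The point requiring care is (b): one must show that the probability, in each exploitation epoch beginning after $t_{0}$, that the top-$M$ sample means do not coincide with the true $M$ best arms is summable. Since by construction each arm has been played at least $f(t)\log t\geq D^{*}\log t$ times in the exploration epochs, the same Chernoff-type bound for finite-state reversible Markov chains invoked in the proof of Theorem~\ref{thm:singleuserM} applies verbatim, producing a tail bound of the form $t^{-\alpha}$ with $\alpha>1$; summing over exploitation epochs contributes only $O(1)$ to the expected regret. Crucially, no step in that Chernoff argument requires $D$ to be constant in $t$; only the inequality $D(t)\geq D^{*}$ at the start of each exploitation epoch after $t_{0}$ is used.

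Combining (a), (b), and (c) with the pre-$t_{0}$ constant yields $r_{\Phi}(t)=O(f(t)\log t)$, as claimed. The principal (and essentially only) obstacle is the bookkeeping for source (b): one must verify that the exponent $\alpha$ in the Markov-chain Chernoff tail remains strictly greater than one \emph{uniformly} in $t\geq t_{0}$. This reduces to checking that $\alpha$ depends only on $L$ and on the product $D^{*}(\mu_{\sigma(M)}-\mu_{\sigma(M+1)})^{2}\geq 4L$, which is exactly the role played by the threshold condition~(\ref{eqn:conditionD1M}) in the proof of Theorem~\ref{thm:singleuserM}; once this is confirmed, the entire argument of Appendix~C transfers with $D$ replaced by $f(t)$ and the stated regret order follows.
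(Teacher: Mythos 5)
Your proposal is correct and follows the same skeleton as the paper's Appendix~B: the same three-way decomposition (switching/transient, exploration, exploitation mistakes), the same observation that the finite prefix before the growing parameter clears the unknown threshold contributes only $O(1)$, and the same Chernoff machinery from Appendices~A and~C for the mistake probabilities. The one substantive difference is the treatment of the analysis constant $L$. The paper does not freeze $L$: it chooses $L(t)\to\infty$ with $D(t)/L(t)\to\infty$, so that after the finite waiting time the Chernoff exponent itself diverges, the per-epoch mistake contribution decays geometrically, and the exploitation term is $O(1)$. You instead freeze $L$ and wait until $f(t)\geq D^{*}=4L/(\mu_{\sigma(M)}-\mu_{\sigma(M+1)})^{2}$, which is a legitimate (and arguably simpler) alternative; however, your claim that the bound from the proof of Theorem~\ref{thm:singleuserM} applies ``verbatim'' and yields a tail $t_{n}^{-\alpha}$ with $\alpha>1$ is not quite right. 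With $L$ equal to the value $30r_{\max}^{2}/((3-2\sqrt{2})\epsilon_{\min})$ used there, the exponent in (\ref{eqn:upperboundij}) is exactly $-1$; each exploitation epoch then contributes at most a constant to the regret (mistake probability of order $t_{n}^{-1}$ times epoch length of order $t_{n}$), and summing over the $O(\log t)$ exploitation epochs gives $O(\log t)$, not $O(1)$. This is harmless for the theorem, since $O(\log t)$ is still dominated by $f(t)\log t$; alternatively, you can fix $L$ strictly above the mixing threshold (say twice it), which genuinely gives $\alpha>1$ and geometric summability, at the cost of a larger but still finite $D^{*}$ and $t_{0}$. Relatedly, your final paragraph misattributes what controls $\alpha$: the exponent is governed by $L\epsilon_{\min}/r_{\max}^{2}$ alone, while the condition $D\geq 4L/(\mu_{\sigma(M)}-\mu_{\sigma(M+1)})^{2}$ plays no role in the exponent and serves only to eliminate the deterministic middle event in (\ref{eqn:twothreeevents}).
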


\begin{proof}
See Appendix~B for details.
\end{proof}

\section{The Decentralized Setting} \label{sec:DT}

\subsection{Problem Formulation}
In the decentralized setting, there are $M$ players and $N$
independent arms ($N>M$). At each time, each player chooses one arm
to play based on its local observations. As in the single player
case, the reward state of arm $j$ changes according to a Markovian
rule when played, and the same set of notations are adopted. For the
state transition of a passive arm, we consider two models: the
endogenous restless model and the exogenous restless model. In the
former, the arm evolves according to an arbitrary unknown random
process even when it is not played. In the latter, the system itself
is rested. From each individual player's perspective, however, arms
are restless due to actions of other players that are unobservable
and uncontrollable. The players do not know the arm dynamics and do
not communicate with each other. Collisions occur when multiple
players select the same arm to play. Different collision models can
be adopted, where the players in conflict can share the reward or no
one receives any reward. In the former, the total reward under a
policy $\Phi$ by time $t$ is given by
\begin{eqnarray}
R(t) = \sum_{j=1}^{N}\sum_{n=1}^{T_j(t)}s_j(t_j(n))
\mathbb{I}_j({t_j(n)}),
\end{eqnarray}
where for the case conflicted players share the reward
$\mathbb{I}_j({t_j(n)}) = 1$ if arm $j$ is played at least one
player at time $t_j(n)$, and $\mathbb{I}_j({t_j(n)}) = 0$ otherwise;
for the case conflicted players get no reward
$\mathbb{I}_j({t_j(n)}) = 1$ if arm $j$ is played one and only one
player at time $t_j(n)$, and $\mathbb{I}_j({t_j(n)}) = 0$ otherwise.

Under both restless models and both collision models, regret
$r_{\Phi} (t)$ is defined as the reward loss with respect to the
ideal scenario of a perfect orthoganalization of the $M$ players
over the $M$ best arms. We thus have
\begin{eqnarray} \label{eqn:regret}
r_\Phi(t) = t\sum_{i=1}^{M}\mu_{\sigma(i)} - \mbbE_{\Phi}R(t) +
O(1),
\end{eqnarray}
where the $O(1)$ constant term comes from the transient effect of
the $M$ best arms (similar to the single-player setting). Note that
under the exogenous restless model, this definition of regret is
strict in the sense that $t\sum_{i=1}^{M}\mu_{\sigma(i)} + O(1) $ is
indeed the maximal expected reward achievable under a known model of
the arm dynamics.

\subsection{Decentralized DSEE Policy}\label{sec:policy}

For the ease of presentation, we first assume that the players are
synchronized according to a global time. Since the epoch structure
of DSEE is deterministic, global timing ensures synchronized
exploration and exploitation among players. We further assume that
the players have pre-agreement on the time offset for sharing the
arms, determined based on, for example, the players' ID. We will
show in Sec.~\ref{sec:elimination} that this requirement on global
timing and pre-agreement can be eliminated  to achieve a complete
decentralization.

The decentralized DSEE has a similar epoch structure. In the
exploration epochs (with the $n$th one having length $ N \times
4^{n-1}$), the players play all $N$ arms in a round-robin fashion
with different offsets determined in the pre-agreement. In the
exploitation epochs, each players calculates the sample mean of
every arm based on its own local observations and plays the arms
with the $M$ largest sample mean in a round-robin fashion with a
certain offset. Note that even though the players have different
time-sharing offsets, collisions occur during exploitation epochs
since the players may arrive at different sets and ranks of the $M$
arms due to the randomness in their local observations. Each of
these $M$ arms is played $2\times 4^{n-1}$ times. The $n$th
exploitation epoch thus has length $2M\times 4^{n-1}$. A detailed
description of the decentralized DSEE policy is given in
Fig.~\ref{fig:decenpolicy}.

\begin{figure}[htbp]
\begin{center}
\noindent\fbox{
\parbox{5.4in}
{ \centerline{\underline{{\bf Decentralized DSEE}}} {\small Time is
divided into  exploration and exploitation epochs with $n_O(t)$ and
$n_I(t)$ similarly defined as in Fig.~\ref{fig:singleplayerpolicy}.
\begin{enumerate}
\item[1.] At $t=1$, each player starts the first exploration epoch with length $N$. Player $k$ plays arm $(k + t) \oslash N$ at time $t$. Set $n_O(N+1) = 1$, $n_I(N+1) = 0$. Then go to Step $2$.
\item[2.] Let $X_O(t) = (4^{n_O}-1)/3$ be the time spent on each arm in exploration epochs by time
$t$. Choose $D$ according to (\ref{eqn:conditionD}). If
\begin{eqnarray}\label{eqn:condition}
X_O(t)
> D \log t ,
\end{eqnarray} go to Step $3$. Otherwise, go to
Step $4$.
\item[3.] Start an exploitation epoch with length $2M \times 4^{n_I-1}$.
Calculate sample mean $\bar{s}_i(t)$ of each arm and denote the arms
with the $M$ largest sample means as arm $a^*_{1}$ to arm $a^*_{M}$.
Each exploitation epoch is divided into $M$ subepochs with each
having a length of $2 \times 4^{n_I-1}$. Player $k$ plays arm
$a^*_{(k+m)\oslash M}$ in the $m$th subepoch. Increase $n_I$ by one.
Go to step $2$.
\item[4.] Start an exploration epoch with length $N \times 4^{n_O-1}$.
 Each exploration epoch is divided into $N$ subepochs
with each having a length of $4^{n_O-1}$. Player $k$ plays arm
$a_{(m+k) \oslash N}$ in the $m$th subepoch. Increase $n_O$ by one.
Go to step $2$.
\end{enumerate}}
}} \caption{Decentralized DSEE policy for
RMAB.}\label{fig:decenpolicy}
\end{center}
\end{figure}

\subsection{Regret Analysis} \label{sec:Performance}
In this section, we show that the decentralized DSEE policy achieves
the same logarithmic regret order as in the centralized setting.
\begin{theorem}\label{thm:multipleplayers}
Under the same notations and definitions as  in Theorem
\ref{thm:singleuser}, assume that different arms have different mean
values\footnote{This assumption can be relaxed when the players
determine the round-robin order of the arms based on pre-agreed arm
indexes rather than the estimated arm rank. This assumption is only
for simplicity of the presentation.}. Set the policy parameter $D$
to satisfy the following condition:
\begin{eqnarray}\label{eqn:conditionD}
D & \geq & \frac{4L}{(\min_{j\leq M}( \mu_{\sigma(j)} -
\mu_{\sigma(j+1)}))^2}.
\end{eqnarray}
Under sharing reward conflict model, the regret of the decentralized
DSEE at the end of any epoch can be upper bounded by
\begin{eqnarray}\label{eqn:2}
r_{\Phi}(t) \leq  C_1\lceil \log_4 (\frac{3t}{2M} +1) \rceil  +
C_2(\lfloor \log _4 (3D \log t +1) \rfloor + 1)  +C_3[4(3D \log t
+1)-1],
\end{eqnarray}
where
\begin{eqnarray}
C_1  & = &  \left\{ \begin{array}{rl } &  \sum_{m = 1}^M
\mu_{\sigma(m)} \frac{3M}{ \pi_{\min}}  \sum_{j = 1}^{M} \sum_{i =
1, i \neq j}^{N} \sum_{ k = i,j} \left( \frac{1}{\log 2} +
\frac{\sqrt{2}\epsilon_k \sqrt{L }}{10 \sum_{s \in {S}_k} s} \right)
|\mathcal{S}_k| +M^2 A_{\max}, \\ & \qquad   \qquad \qquad \qquad
\text{
Endogenous restless and  zero-reward collision model} \\
&  \frac{3M}{ \pi_{\min}}  \sum_{j = 1}^{M}\mu_{\sigma(j)} \sum_{i =
1, i \neq j}^{N} \sum_{ k = i,j} \left( \frac{1}{\log 2} +
\frac{\sqrt{2}\epsilon_k \sqrt{L }}{10 \sum_{s \in {S}_k} s} \right)
|\mathcal{S}_k| +M^2 A_{\max}, \\ &   \qquad \qquad \qquad \qquad
\text{ Endogenous restless and partial-reward collision model}
\\ & \sum_{m = 1}^M \mu_{\sigma(m)} \frac{3M}{ \pi_{\min}}  \sum_{j = 1}^{M}
\sum_{i = 1, i \neq j}^{N} \sum_{ k = i,j} \left( \frac{1}{\log 2} +
\frac{\sqrt{2}\epsilon_k \sqrt{L }}{10 \sum_{s \in {S}_k} s} \right)
|\mathcal{S}_k|, \\  &  \qquad   \qquad  \qquad \qquad
 \text{ Exogenous restless and zero-reward collision  model }
 \\ &  \frac{3M}{ \pi_{\min}}  \sum_{j = 1}^{M} \mu_{\sigma(j)}
\sum_{i = 1, i \neq j}^{N} \sum_{ k = i,j} \left( \frac{1}{\log 2} +
\frac{\sqrt{2}\epsilon_k \sqrt{L }}{10 \sum_{s \in {S}_k} s} \right)
|\mathcal{S}_k|, \\  &  \qquad   \qquad  \qquad \qquad
 \text{ Exogenous restless  and partial-reward collision model}\end{array} \right. \\
C_2 & = & \left\{ \begin{array}{rl }  NM A_{\max},& \text{Endogenous
 restless model}
\\ 0, & \text{Exogenous  restless model}
\end{array} \right. \\
C_3  & = &
 \frac{1}{3} \left(N
\sum_{i=1}^{M}\mu_{\sigma(i)} - M
\sum_{i=1}^{N}\mu_{\sigma(i)}\right).
  \end{eqnarray}

\end{theorem}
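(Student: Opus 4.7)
My plan is to extend the proof of Theorem~\ref{thm:singleuserM} to the decentralized setting by decomposing the regret of the decentralized DSEE policy into three contributions that align with the three terms in (\ref{eqn:2}):
\[
r_\Phi(t) \le R_{\text{expl}}(t) + R_{\text{trans}}(t) + R_{\text{explt}}(t),
\]
where $R_{\text{expl}}(t)$ is the deterministic shortfall accumulated during exploration epochs (yielding the $C_3$ term), $R_{\text{trans}}(t)$ is the expected transient loss incurred each time an arm is re-engaged after being passive (yielding $C_2$), and $R_{\text{explt}}(t)$ is the stochastic loss during exploitation epochs caused by one or more players misidentifying the $M$-best set, together with the collisions this causes (yielding $C_1\log t$).

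For the first two pieces I would reuse the tools of the single-player proof. The stopping rule (\ref{eqn:condition}) caps the exploration samples per arm at $\tfrac{4}{3}(3D\log t + 1)$; during each exploration slot the $M$ players play in round-robin with distinct offsets, so that the per-slot reward shortfall against the $M$-best benchmark is $\sum_{i\le M}\mu_{\sigma(i)} - \tfrac{M}{N}\sum_{i\le N}\mu_{\sigma(i)}$, and multiplying by the length bound produces the $C_3$ term. For $R_{\text{trans}}(t)$ I would invoke the standard Markov-chain coupling estimate used in the single-player proof, which bounds the expected deviation from the stationary reward by $A_{\max}$ per re-engagement, and then count the switches. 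Under the endogenous model each of the $\lfloor \log_4(3D\log t + 1)\rfloor + 1$ exploration epochs forces at most $NM$ switches (one per arm, per player), producing the $C_2 = NM A_{\max}$ contribution; under the exogenous model passive arms are frozen, so no transient is incurred and $C_2 = 0$.

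The main work lies in bounding $R_{\text{explt}}(t)$. I would apply Lezaud's Chernoff--Hoeffding inequality for finite, irreducible, aperiodic, reversible Markov chains to each player's sample mean of each arm. With $D$ chosen per (\ref{eqn:conditionD}), the probability that any one of the $M$ players misranks a pair $(\sigma(j),\sigma(i))$ with $j \le M < i$ at the start of the $n$th exploitation epoch (time $t_n \asymp M\cdot 4^{n-1}$) is of order $t_n^{-1}$; since the epoch has length $2M\cdot 4^{n-1}$, the expected per-epoch excess loss on this error event is $O(1)$, and summing over the $O(\log t)$ exploitation epochs produces the $C_1\log t$ term. The reward-mean factors inside $C_1$ differ across the four cases because under the zero-reward collision model each erroneous slot loses the full $\sum_m \mu_{\sigma(m)}$, whereas under partial-reward only the gap $\mu_{\sigma(j)}$ enters; the added $M^2 A_{\max}$ under the endogenous model captures the $M$ intra-epoch subepoch switches by each of the $M$ players. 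The hardest step will be to make the concentration argument go through under complete decentralization: with no information exchange and arbitrary passive-interval evolution in the endogenous model, each player's observations of arm $i$ are not automatically a restart of the Markov chain $P_i$. I would circumvent this by exploiting the deterministic round-robin structure, which guarantees that within any exploration subepoch a block of $4^{n-1}$ consecutive plays on arm $i$ is executed by a single player and is therefore a genuine Markov sample of $P_i$. A union bound over players and arm pairs then yields the desired decay, and collisions during exploitation are automatically subsumed because they can occur only on the same misranking error event.
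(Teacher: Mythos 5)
Your proposal is correct and follows essentially the same route as the paper's Appendix~D: the same three-way split into exploration loss ($C_3$), transient/switching loss bounded by $A_{\max}$ per re-engagement via the Anantharam-type lemma ($C_2$, plus the $M^2 A_{\max}$ exploitation-switching term inside $C_1$), and exploitation-epoch misranking loss controlled by a Chernoff bound for reversible Markov chains applied per player to per-segment contiguous blocks, with the epoch-length-times-$t_n^{-1}$ cancellation giving a constant per epoch and hence $C_1\log t$ overall. The only cosmetic difference is that you invoke Lezaud's inequality where the paper uses Gillman's bound (Lemma~2), and your handling of the noncontiguous-observation issue and of the four collision/restless cases mirrors the paper's argument.
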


 \vspace{1.0em}
\begin{proof}
See Appendix~D for details.
\end{proof}

Achieving the logarithmic regret order requires an arbitrary (but
nontrivial) bound on $r_{\max}$, $\min_{j\leq M}( \mu_{\sigma(j)} -
\mu_{\sigma(j+1)})$, and $\epsilon_{\min}$.  Similarly to the
single-player case, $D$ can be chosen to increase with time to
achieve a regret order arbitrarily close to logarithmic as stated
below.

\begin{theorem}\label{thm:multipleplayersarbitraryclose}
Under the same notations and definitions as  in Theorem
\ref{thm:singleuser}, assume that different arms have different mean
values. For any increasing sequence $f(t)$ ($f(t) \rightarrow \infty
$ as $t \rightarrow \infty$), if $D(t)$ is chosen such that $D(t) =
f(t)$, then under both the endogeneous and exogeneous restless
models,
\begin{eqnarray}
r_{\Phi}(t) \sim O(f(t)\log t).
\end{eqnarray}
\end{theorem}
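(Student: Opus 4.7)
The plan is to mimic the single-player argument used for Theorem~\ref{thm:singleplayerarbitraryclose} (Appendix~B) and leverage Theorem~\ref{thm:multipleplayers} as a black box for the bulk of the analysis. The obstruction that Theorem~\ref{thm:multipleplayers} requires a \emph{constant} $D$ satisfying (\ref{eqn:conditionD}) is removed by exploiting the fact that $f(t)\to\infty$, so the bound (\ref{eqn:conditionD}) is eventually satisfied by $D(t)=f(t)$.

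First, since $f(t)$ is increasing and divergent, there exists a finite $T_0$ such that for every $t\geq T_0$,
\begin{eqnarray}
f(t) \;\geq\; \frac{4L}{(\min_{j\leq M}(\mu_{\sigma(j)}-\mu_{\sigma(j+1)}))^2}. \nonumber
\end{eqnarray}
For $t<T_0$, the regret is trivially upper-bounded by $T_0\sum_{i=1}^M \mu_{\sigma(i)}$, a constant independent of $t$. Hence without loss of generality I focus on $t\geq T_0$.

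Second, I would decompose the regret on $[T_0,t]$ exactly as in the proof of Theorem~\ref{thm:multipleplayers} into three contributions: (i) the reward loss due to time spent in exploration epochs, (ii) the reward loss from collisions and from playing a suboptimal set of arms during exploitation epochs, and (iii) the $O(1)$ transient term from each switch of an arm. Contribution (i) is directly controlled by the test (\ref{eqn:condition}): the total number of exploration slots up to time $t$ is at most $N\cdot (3D(t)\log t + 1)$, since $X_O$ grows geometrically and $D(t)=f(t)$ is the threshold at time $t$. This gives an exploration cost of order $f(t)\log t$, which dominates the final bound. Contribution (iii) is unchanged from Theorem~\ref{thm:multipleplayers} and contributes at most $O(\log\log t + \log f(t))$ because the number of epochs by time $t$ is at most $\log_4(3D(t)\log t+1)+O(1)$.

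The delicate piece is contribution (ii). I would show that for every exploitation epoch initiated at a time $\tau\geq T_0$, the sample mean of each arm has been formed from at least $X_O(\tau)\geq f(\tau)\log\tau$ Markovian samples. Invoking the same Lezaud-type large-deviation bound used in Theorem~\ref{thm:multipleplayers} (which is applicable once $f(\tau)\geq 4L/(\min_{j\leq M}(\mu_{\sigma(j)}-\mu_{\sigma(j+1)}))^2$), the probability that the $M$ arms with the largest sample means do not coincide with the $M$ truly best arms is bounded by $O(\tau^{-2})$. Summed over all exploitation epochs (whose starting times are a subsequence of $\mathbb{N}$), the total expected reward loss from such mis-identifications and the ensuing collisions is $O(1)$. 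The only change relative to Theorem~\ref{thm:multipleplayers} is that the constant prefactor in the Chernoff bound is replaced by a bound depending on $f(\tau)$, but this only improves the estimate since $f(\tau)$ is increasing.

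Combining the three contributions, the dominant term is the exploration cost, yielding $r_{\Phi}(t) = O(f(t)\log t)$ uniformly in both the endogenous and exogenous restless models, and under either collision rule, exactly as in (\ref{eqn:2}). The main subtlety to check carefully is that one can apply the large-deviation bound with the \emph{time-dependent} threshold $f(\tau)\log\tau$ and still obtain a summable tail; this follows because $f(\tau)$ only enters as an additional multiplicative factor in the exponent of the Chernoff bound and therefore never worsens the $\tau^{-2}$ decay. No other step of Appendix~D requires modification.
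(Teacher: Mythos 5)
Your overall skeleton matches the paper's proof in Appendix~E: split time at a finite threshold $T_0$ after which the divergence of $f$ makes the fixed-parameter analysis of Theorem~\ref{thm:multipleplayers} applicable, absorb the pre-$T_0$ regret into a constant, and observe that the exploration term, of order $D(t)\log t = f(t)\log t$, dominates. One legitimate difference: the paper lets \emph{both} analysis parameters diverge ($L(t)\to\infty$ with $D(t)/L(t)\to\infty$), whereas you freeze $L$ at its nominal value from Theorem~\ref{thm:singleuser} and let only $D(t)=f(t)$ grow. Since $L$ appears only in the analysis (through $C_{t,w}=\sqrt{L\log t/w}$) and never in the policy, freezing it is permissible.

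However, two of your quantitative claims are wrong, and the first reveals a misunderstanding of the mechanism. With $L$ fixed at the value in Theorem~\ref{thm:singleuser}, the per-epoch mis-identification probability is $O(\tau^{-1})$, as in (\ref{eqn:upperboundij}) — not $O(\tau^{-2})$. Your justification, that $f(\tau)$ ``enters as an additional multiplicative factor in the exponent of the Chernoff bound,'' is false: $D$ never appears in the Chernoff exponent. The exponent is controlled solely by $L$, because the confidence radius $C_{t,w}=\sqrt{L\log t/w}$ shrinks exactly in proportion to the sample count $w$; more exploration samples do \emph{not} sharpen the tail. The only role of $D$ (hence of $f$) is to make the event $\{\mu_j < \mu_i + C_{t_n,w_i}+C_{t_n,w_j}\}$ impossible once $D(t_n)\geq 4L/(\min_{j\leq M}(\mu_{\sigma(j)}-\mu_{\sigma(j+1)}))^2$. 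This is precisely why the paper grows $L(t)\to\infty$: a diverging $L$ is what buys a faster-than-$\tau^{-1}$ decay and a constant total exploitation loss. With your fixed $L$, the exploitation-mistake regret is $O(\log t)$ rather than $O(1)$ — still $o(f(t)\log t)$, so the theorem survives, but not by the argument you gave. Second, your bound $O(\log\log t + \log f(t))$ on the transient/switching term counts only exploration epochs; the exploitation epochs number $\lceil \log_4(\frac{3t}{2M}+1)\rceil = O(\log t)$, so that term is $O(\log t)$ (cf.\ (\ref{eqn:endo12})). Again harmless, since $\log t = o(f(t)\log t)$. In short: the structure is right and the theorem follows after these two corrections, but both intermediate estimates, as stated and as justified, are incorrect.
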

\begin{proof}
See Appendix~E for details.
\end{proof}

\subsection{In the Absence of Global Synchronization and Pre-agreement}\label{sec:elimination}

In this section, we show that the requirement on global
synchronization and pre-agreement can be eliminated while
maintaining the logarithmic order of the policy. As a result,
players can join the system at different times.

Without global timing and pre-agreement, each player has its own
exploration and exploitation epoch timing. The epoch structure of
each player's local policy is similar to that given in
Fig.~\ref{fig:decenpolicy}. The only difference is that in each
exploitation epoch, instead of playing the top $M$ arms (in terms of
sample mean) in a round-robin fashion, the player randomly and
uniformly chooses one of them to play. When a collision occurs
during the exploitation epoch, the player makes another random and
uniform selection among the top $M$ arms. As shown in the proof of
Theorem~\ref{thm:5}, this simple adjustment based on collisions
achieves efficient sharing among all players without global
synchronization and pre-agreement. Note that during an exploration
epoch, the player plays all $N$ arms in a round-robin fashion
without reacting to collisions. Since the players still observe the
reward state of the chosen arm, collisions affect only the immediate
reward but not the learning ability of each player. As a
consequence, collisions during a player's exploration epochs will
not affect the logarithmic regret order since the total length of
exploration epochs is at the logarithmic order. The key to
establishing the logarithmic regret order in the absence of global
synchronization and pre-agreement is to show that collisions during
each player's exploitation epochs are properly bounded and efficient
sharing can be achieved.

\begin{theorem}\label{thm:5}
Under the same notations and definitions as in Theorem
\ref{thm:singleuser}, Decentralized DSEE without global
synchronization and pre-agreement achieves logarithmic regret order.
\end{theorem}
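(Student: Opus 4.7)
The plan is to follow the same decomposition used for Theorem~\ref{thm:multipleplayers}, while isolating two additional sources of regret that are unique to the fully decentralized setting: (i) the misalignment of the exploration/exploitation epoch sequences across players with independent local clocks, and (ii) the replacement of the pre-agreed round-robin sharing inside exploitation epochs with a randomized contention-and-backoff scheme. We regard the regret as the sum of reward losses incurred in three disjoint sets of slots: slots in which \emph{at least one} player is in a local exploration epoch, slots in which \emph{all} players are in local exploitation epochs but some player's estimated top-$M$ set is wrong, and slots in which all players are in exploitation with the correct top-$M$ set but have not yet reached an orthogonal assignment in the current contention round.

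First, I would argue that each player's local exploration budget is still $O(\log t)$: the triggering rule $X_O(t)>D\log t$ in Fig.~\ref{fig:decenpolicy} is defined purely in local time, so the analysis behind Theorem~\ref{thm:singleuser} goes through verbatim and shows that player $k$ spends at most $O(\log t)$ slots in exploration. Summing over the $M$ players, the total number of slots in which \emph{some} player is exploring is $O(\log t)$; since every such slot contributes at most a bounded per-slot loss (a constant reward scale times $M$), the reward loss from exploration slots and from asynchronous exploration/exploitation mixing is $O(\log t)$.

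Next, I would bound the regret coming from mis-estimation during exploitation. By the concentration bound for reversible Markov chains used in Appendix~A (the Lezaud-type inequality that underlies Theorem~\ref{thm:singleuser}), once a player has accumulated at least $D\log t$ samples from every arm with $D$ chosen as in (\ref{eqn:conditionD}), the probability that its sample-mean ranking misidentifies the top-$M$ set is summable in $t$. Because the local exploration condition forces every player to satisfy this sample-count lower bound before entering each local exploitation epoch, the expected number of exploitation slots during which \emph{any} player has an incorrect top-$M$ estimate is $O(1)$ (or at worst $O(\log t)$ after multiplying by constants), contributing only $O(\log t)$ to the regret.

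The remaining and main technical obstacle is the contention analysis for the slots in which every player is exploiting the correct top-$M$ set. In such a slot, each player independently draws a uniform arm among the $M$ best and re-draws whenever it collides. Conditioned on this ``good'' state, the probability that the $M$ players pick a permutation of $\{\sigma(1),\dots,\sigma(M)\}$ in a given contention round is $M!/M^M$, which is a positive constant $p_M>0$ independent of $t$; once an orthogonal assignment is reached it is held until the end of that player's current exploitation block (modulo further collisions induced by another player leaving exploration). Thus within any maximal contiguous window during which the set of exploiting players is fixed, the expected number of collision slots before orthogonalization is a geometric random variable with mean $1/p_M$, and by the previous two steps the number of such windows is $O(\log t)$. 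Combining, the total expected reward lost to contention in good exploitation slots is $O(\log t)$.

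The hard part, and the step that requires the most care, is the third one: coupling the random-access sharing with the geometrically growing and \emph{misaligned} exploitation blocks, so that every transition between windows (caused by some player switching in or out of exploration, or starting a new exploitation epoch) only adds a constant expected number of collision slots rather than resetting a long backoff process. Making this precise—so that the bound on the number of transition events times the expected per-transition collision cost cleanly yields $O(\log t)$—is the principal obstacle; once it is handled, the three contributions combine to give the claimed logarithmic regret order, matching Theorem~\ref{thm:multipleplayers} up to constants.
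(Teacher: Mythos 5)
Your proposal is correct and follows essentially the same route as the paper's proof in Appendix~F: the same three-way decomposition into exploration slots, mis-identification slots, and collision slots, with the first two bounded at $O(\log t)$ by applying the earlier theorems to each player's local clock, and the third bounded by a constant expected number of collisions per orthogonalization ``window'' times an $O(\log t)$ number of windows. The coupling step you flag as the principal remaining obstacle is handled in the paper with exactly the argument you sketch (and no more rigorously): since each player's number of exploitation epochs is itself $O(\log t)$, every window boundary---whether a singular slot or an exploitation-epoch restart with fresh randomization---adds only a constant expected number of collisions, so the product remains $O(\log t)$.
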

\begin{proof}
See Appendix~F for details.
\end{proof}

The assumption that the arm reward state is still observed when
collisions occur holds in many applications. For example, in the
applications of dynamic spectrum access and opportunistic
communications under unknown fading, each user first senses the
state (busy/idle or the fading condition) of the chosen channel
before a potential transmission. Channel states are always observed
regardless of collisions. The problem is much more complex when
collisions are unobservable and each player involved in a collision
only observes its own local reward (which does not reflect the
reward state of the chosen arm). In this case, collisions result in
corrupted measurements that cannot be easily screened out, and
learning from these corrupted measurements may lead to misidentified
arm rank. How to achieve the logarithmic regret order without global
timing and pre-agreement in this case is still an open problem.

\section{Simulation Results} \label{sec:simulation}

In this section, we study the performance of DSEE as compared to the
RCA policy proposed in~\cite{Tekin:10-2}. The first example is in
the context of cognitive radio networks. We consider that a
secondary user searches for idle channels unused by the primary
network. Assume that the spectrum consists of $N$ independent
channels. The state---busy ($0$) or idle ($1$)---of each channel
(say, channel $n$) evolves as a Markov chain with transition
probabilities $\{p_{ij}^n\} ~ i,j \in \{0,1\}$. At each time, the
secondary user selects a channel to sense and choose the
transmission power according to the channel state. The reward
obtained from a transmission over channel $n$ in state $i$ is given
by $r_i^n$. We use the same set of parameters chosen
in~\cite{Tekin:10} (given in the caption of Fig.~\ref{fig:Simu1}).
We observe from Fig.~\ref{fig:Simu1} that RCA initially outperforms
DSEE for a short period, but DSEE offers significantly better
performance as time goes, and the regret offered by RCA does not
seem to converge to the logarithmic order in a horizon of length
$10^4$. We also note that while the condition on the policy
parameter $D$ given in (\ref{eqn:conditionD1}) is sufficient for the
logarithmic regret order, it is not necessary. Fig.~\ref{fig:Simu1}
clearly shows the convergence to the logarithmic regret order for a
small value of D, which leads to better finite-time performance.

\begin{figure}[h]
\centerline{
\begin{psfrags}
\scalefig{0.8}\epsfbox{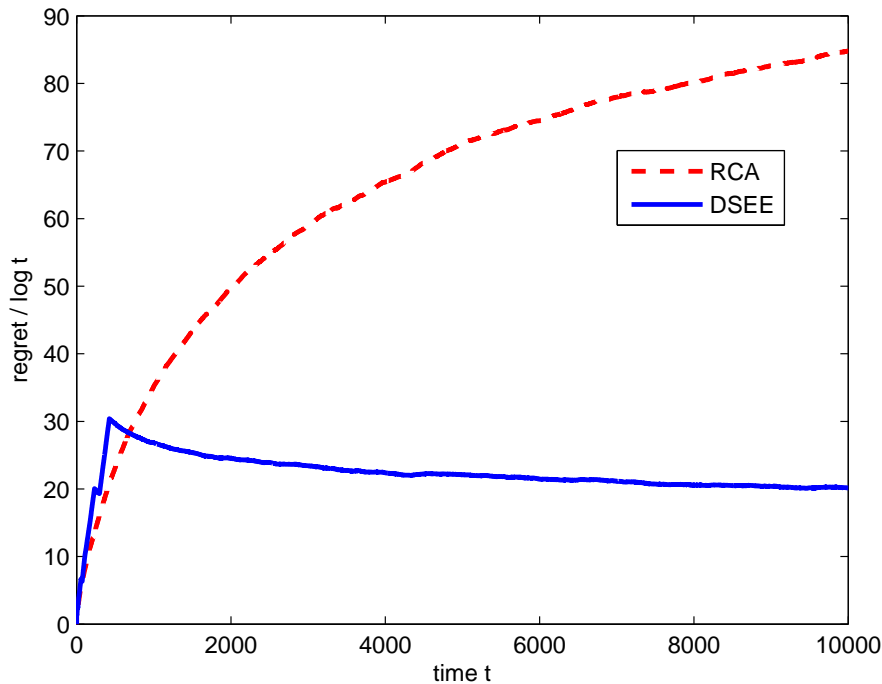}
\end{psfrags}}
\caption{Regret for DSEE and RCA, $p_{01} =
 [0.1, 0.1, 0.5,
0.1, 0.1]$, $p_{10} = [ 0.2, 0.3, 0.1, 0.4, 0.5]$, $r_1 =
[1,1,1,1,1]$, $r_0 = [0.1,0.1,0.1,0.1,0.1]$, $D = 10$, $L = 10$,
$100$ Monte Carlo runs.}\label{fig:Simu1}
\end{figure}

In the next example, we consider a case with a relatively large
reward state space. We consider a case with  $5$ arms, each having
$20$ states. Rewards from each state for arm~$2$ to arm~$5$ is
$[1,2,\cdots 20]$. Rewards from each state for arm~$1$ is
$1.5\times[1,2,\cdots 20]$ (to make it a better arm than the rest).
Transition probabilities  of all arms were generated randomly and
can be found in Appendix~G. The stationary distributions of all arms
are close to uniform, which avoids the most negative effect of
randomly chosen pilot states in RCA. The values of $D$ in DSEE and
$L$ in RCA were chosen to be the minimum as long as the ratio of the
regret to $\log t$ converges to a constant with a reasonable time
horizon. We again observe a better performance from DSEE as shown in
Fig.~\ref{fig:Simu2}.

\begin{figure}[h]
\centerline{
\begin{psfrags}
\scalefig{0.8}\epsfbox{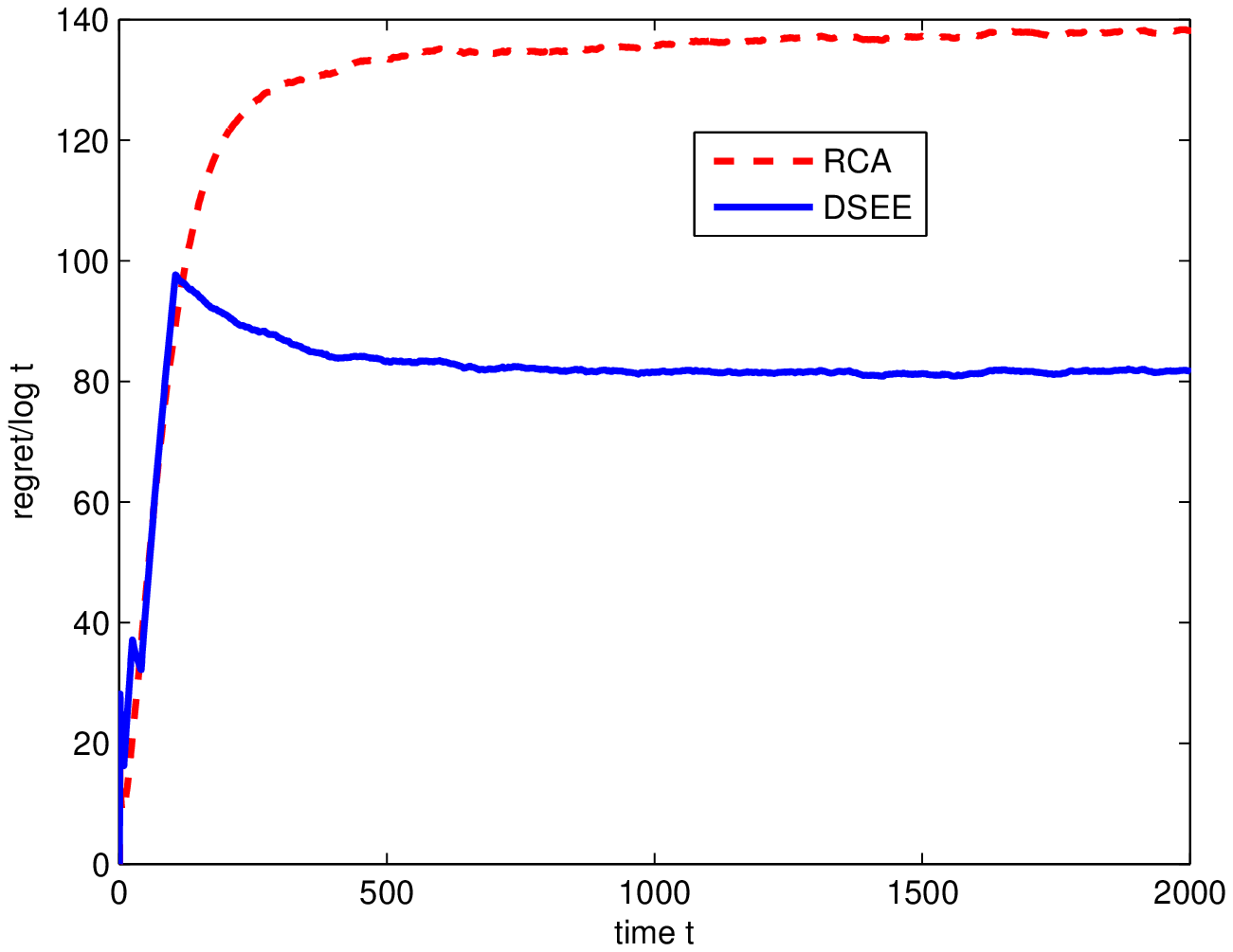}
\end{psfrags}}
\caption{Regret for DSEE and RCA with $5$ arms, $20$ states, $L =
20$, $D = 1.8$, $1000$ Monte Carlo runs. }\label{fig:Simu2}
\end{figure}

The better performance of DSEE over RCA may come from the fact that
DSEE learns from all observations while RCA only uses observations
within the regenerative cycles in learning. When the arm reward
state space is large or the randomly chosen pilot state that defines
the regenerative cycle has a small stationary probability, RCA may
have to discard a large number of observations from learning.

\section{Conclusion} \label{sec:conclusion}

In this paper, we studied the restless multi-armed bandit (RMAB)
problem with unknown dynamics under both centralized (single-player)
and decentralized settings. We developed a policy based on a
deterministic sequencing of exploration and exploitation with
geometrically growing epochs that achieves the logarithmic regret
order. In particular, in the decentralized setting with multiple
distributed players, the proposed policy achieves a complete
decentralization for both the exogenous and endogenous restless
models.

\section*{Appendix A. Proof of Theorem~\ref{thm:singleuser} and Theorem~\ref{thm:singleuserM}}\label{sec:AppendixA}

We first rewrite the definition of regret as
\begin{eqnarray}
r_{\Phi}(t) & = & t\mu_{\sigma(1)}  - \mbbE_{\Phi}R(t)
\\ & = & \sum_{i=1}^{N} [\mu_i \mbbE[T_i(t)] -
\mbbE[\sum_{n=1}^{T_i(t)}s_i(t_i(n))]] + \left[ t\mu_{\sigma(1)}-
\sum_{i=1}^{N} \mu_i \mbbE[T_i(t)]\right]. \label{eqn:regreform}
\end{eqnarray}
To show that the regret has a logarithmic order, it is sufficient to
show that the two terms in (\ref{eqn:regreform})
 have logarithmic orders. The first term in (\ref{eqn:regreform}) can be considered as the
regret caused by transient effect. The second term can be considered
as the regret caused by engaging a bad arm. First, we bound the
regret caused by transient effect based on the following lemma.

{\em Lemma $1$}~\cite{Anantharam:87-2}: Consider an irreducible,
aperiodic Markov chain with state space $\mathcal{S}$, transition
probabilities $P$, an initial distribution $\vec{q}$ which is
positive in all states, and stationary distribution $\vec{\pi}$
($\pi_s$ is the stationary probability of state $s$). The state
(reward) at time $t$ is denoted by $s(t)$. Let $\mu$ denote the mean
reward. If we play the chain for an arbitrary time $T$, then there
exists a value $A_P \leq (\min_{s\in \mathcal{S}} \pi_s)^{-1}
\sum_{s\in \mathcal{S}} s $ such that $\mbbE[\sum_{t=1}^{T}s(t) -\mu
T] \leq A_P.$

Lemma~$1$ shows that if the player continues to play an arm for time
$T$, the difference between the expected reward and $ T \mu$ can be
bounded by a constant that is independent of $T$. This constant is
an upper bound for the regret caused by each arm switching. If there
are only logarithmically many arm switchings as times goes, the
regret caused by arm switching has a logarithmic order.  An upper
bound on the number of arm switchings is shown below. It is
developed by bounding the numbers of the exploration epochs and the
exploitation epochs respectively.

For the exploration epochs, by time $t$, if the player has started
the $(n+1)$th exploration epoch, we have
\begin{eqnarray}
\frac{1}{3}(4^n-1) < D \log t,
\end{eqnarray}
 where $\frac{1}{3}(4^n-1) $ is the time spent on each arm in the first $n$
exploration epochs. Consequently the number of the exploration
epochs can be bounded by
\begin{eqnarray}\label{eqn:ubexploration}
n_O(t) \leq \lfloor \log _4 (3D \log t +1) \rfloor + 1.
\end{eqnarray}
By time $t$, at most $(t-N)$ time slots have been spent on the
exploitation epochs. Thus
\begin{eqnarray}\label{eqn:ubexploitation}
n_I(t) \leq \lceil \log_4 (\frac{3}{2} (t- N) +1) \rceil .
\end{eqnarray}
Hence an logarithmic upper bound of the first term in
(\ref{eqn:regreform}) is
\begin{eqnarray}\label{eqn:firstterm}
\sum_{i=1}^{N} [\mu_i \mbbE[T_i(t)] -
\mbbE[\sum_{n=1}^{T_i(t)}s_i(t_i(n))]] \leq (\lceil \log_4
(\frac{3}{2} (t- N) +1) \rceil
 +N(\lfloor \log _4 (3D \log t +1) \rfloor + 1) )A_{\max}.
\end{eqnarray}

Next we show that the second term of (\ref{eqn:regreform}) has a
logarithmic order by bounding the total time spent on the bad arms.
We first bound the time spent on the bad arms during the exploration
epochs. Let $T_{O}(t)$ denote the time spent on each arm in the
exploration epochs by time $t$. By (\ref{eqn:ubexploration}), we
have
\begin{eqnarray}\label{eqn:exploration}
T_{O}(t) \leq \frac{1}{3}[4(3D \log t +1)-1].
\end{eqnarray}
Thus regret caused by playing bad arms in the exploration epochs is
\begin{eqnarray}\label{eqn:MPRploration}
\frac{1}{3}[4(3D \log t +1)-1]   \left(N \mu_{\sigma(1)} -
\sum_{i=1}^{N}\mu_{\sigma(i)}\right).
\end{eqnarray}

Next, we bound the time spent on the bad arms during the
exploitation epochs. Let $t_n$ denote the starting point of the
$n$th exploitation epoch. Let $\Pr[i,j,n]$ denote the probability
that arm $i$ has a larger sample mean than arm $j$ at $t_n$ when arm
$j$ is the best arm, \ie $\Pr[i,j,n]$ is the probability of making a
mistake in the $n$th exploitation epoch. Let $w_i$ and $w_j$ denote,
respectively, the number of plays on arm $i$ and arm $j$ by $t_n$.
Let $C_{t,w}=\sqrt{(L\log t / w)}$. We have
\begin{eqnarray}
\Pr[i,j,n]& \leq & \Pr [ \bar{s}_i(t_n) \geq \bar{s}_{j}(t_n) ]
 \\ \label{eqn:twothreeevents}
& \leq & \Pr [ \bar{s}_{j}(t_n) \leq \mu_j - C_{t_n,w_j}]+
\Pr[\bar{s}_{i}(t_n)\geq
\mu_i + C_{t_n,w_i}] \nonumber \\
& & \quad \quad + \Pr [ \mu_j < \mu_i + C_{t_n,w_i} + C_{t_n,w_j}] )
\\ \label{eqn:twoevents}& \leq & \Pr [ \bar{s}_{j}(t_n)
\leq \mu_j - C_{t_n,w_j}]+ \Pr[\bar{s}_{i}(t_n)\geq \mu_i +
C_{t_n,w_i}],
\end{eqnarray}
where (\ref{eqn:twoevents}) follows from the fact that $w_i \geq
D\log t_n$ and $w_j \geq D\log t_n$ and the condition on $D$ given
in (\ref{eqn:conditionD1}).

Next we bound the two quantities in (\ref{eqn:twoevents}). Consider
first the second term $\Pr[\bar{s}_{i}(t_n)\geq \mu_i + C_{t_n,w_i}]
= \Pr[w_i \bar{s}_{i}(t_n)\geq w_i\mu_i + \sqrt{L w_i \log t_n}]$.
Note that the total $w_i$ plays on arm $i$ consists of multiple
contiguous segments of the Markov sample path, each in a different
epoch. Let $K$ denote the number of such segments. From the
geometric growth of the epoch lengths, we can see that the length of
each segment is in the form of $2^{k_l}$ ( $l  = 1, \cdots, K$) with
$k_l$'s being distinct. Without loss of generality, let $k_1 < k_2 <
\cdots < k_K$. Note that $w_i$, $K$, and $k_l$'s are random
variables. The derivation below holds for every realization of these
random variables. Let $R_i(l)$ denote the total reward obtained
during the $l$th segment. Notice that $w_i = \sum_{l =
1}^{K}2^{k_l}$ and $\sqrt{w_i} \geq \sum_{l = 1}^{K}(\sqrt{2} -
1)\sqrt{2^{k_l}}$. We then have
\begin{eqnarray}
& & \Pr \left[ w_i \bar{s}_{i}(t_n)\geq w_i\mu_i + \sqrt{L w_i \log t_n} \right] \\
& \leq & \Pr \left[ \sum_{l = 1}^{K} R_i(l)\geq \mu_i\sum_{l =
1}^{K}2^{k_l} + \sqrt{L\log t_n}
 (\sqrt{2} - 1) \sum_{l = 1}^{K}\sqrt{2^{k_l}} \right] \\
 & = & \Pr \left[ \sum_{l = 1}^{K} R_i(l)-  \mu_i\sum_{l =
1}^{K}2^{k_l} - \sqrt{L\log t_n}
 (\sqrt{2} - 1) \sum_{l = 1}^{K}\sqrt{2^{k_l}} \geq 0 \right] \\
  & =  & \Pr \left[ \sum_{l = 1}^{K} \left( R_i(l)-  \mu_i 2^{k_l} - \sqrt{L\log t_n}
 (\sqrt{2} - 1) \sqrt{2^{k_l}} \right) \geq 0 \right] \\
  &  \leq  & \sum_{l = 1}^{K}\Pr \left[  R_i(l)-  \mu_i 2^{k_l} - \sqrt{L\log t_n}
 (\sqrt{2} - 1) \sqrt{2^{k_l}}  \geq 0 \right] \\
\label{eqn:34} & = &  \sum_{l=1}^{K} \Pr \left[ R_i(l) - \mu_i
2^{k_l}  \geq  \sqrt{L\log t_n}
 (\sqrt{2} - 1) \sqrt{2^{k_l}}  \right] \\ \label{eqn:35}
 &  = & \sum_{l=1}^{K}  \Pr \left[ \sum_{s\in\mathcal{S}_i}(s O^s_i(l)   -  s 2^{k_l-1}
\pi_s^i )  \geq \sqrt{L\log t_n}
 (\sqrt{2} - 1) \sqrt{2^{k_l}} \right],
\end{eqnarray}
where  $O^s_i(l)$ denote the number of occurrences of state~$s$ on
arm~$i$ in the $l$th segment. The following Chernoff Bound will be
used to bound (\ref{eqn:35}).

{\em Lemma $2$} (Chernoff Bound, Theorem $2.1$ in
\cite{Gillman:98}): Consider a finite state, irreducible, aperiodic
and reversible Markov chain with state space $\mathcal{S}$,
transition probabilities $P$, and an initial distribution
$\mathbf{q}$. Let $N_\mathbf{q} = \lvert (\frac{q_x}{\pi_x}),x \in
\mathcal{S}\rvert_2$. Let $\epsilon$ be the eigenvalue gap given by
$1- \lambda_2$, where $\lambda_2$ is the second largest eigenvalue
of the matrix $P$. Let $A \subset \mathcal{S}$ and $T_A(t)$ be the
number of times that states in $A$ are visited up to time $t$. Then
for any $\gamma \geq 0$, we have\begin{eqnarray} \Pr (T_A(t) - t
\pi_A \geq \gamma) \leq (1+\frac{\gamma \epsilon}{10 t})
N_\mathbf{q} e^{-\gamma^2\epsilon /20 t}.
\end{eqnarray}

Using Lemma~$2$, we have
\begin{eqnarray}
 & & \Pr \left[ \sum_{s\in\mathcal{S}_i}(s O^s_i(l)   -  s 2^{k_l-1}
\pi_s^i )  \geq \sqrt{L\log t_n}
 (\sqrt{2} - 1) \sqrt{2^{k_l}} \right] \\
  & = & \Pr \left[ \sum_{s\in\mathcal{S}_i}(s O^s_i(l)   -  s 2^{k_l-1}
\pi_s^i )  \geq \sqrt{L\log t_n}
 (\sqrt{2} - 1) \sqrt{2^{k_l}} \left( \frac{\sum_{\mathcal{S}_i} s }{\sum_{\mathcal{S}_i}s}\right)  \right] \\
&  = &  \Pr \left[ \sum_{s\in\mathcal{S}_i} \left(s O^s_i(l)   -  s
2^{k_l-1} \pi_s^i -  \sqrt{L\log t_n}
 (\sqrt{2} - 1) \sqrt{2^{k_l}} \left( \frac{s }{\sum_{\mathcal{S}_i}s} \right) \right) \geq  0 \right] \\
&  = &  \Pr \left[ \sum_{s\in\mathcal{S}_i, s \neq 0} \left(s
O^s_i(l)   -  s 2^{k_l-1} \pi_s^i -  \sqrt{L\log t_n}
 (\sqrt{2} - 1) \sqrt{2^{k_l}} \left( \frac{s }{\sum_{\mathcal{S}_i}s} \right) \right) \geq  0 \right] \\
  &  \leq &  \sum_{s\in\mathcal{S}_i, s \neq 0 }  \Pr \left[ s O^s_i(l)   -  s
2^{k_l-1} \pi_s^i -  \sqrt{L\log t_n}
 (\sqrt{2} - 1) \sqrt{2^{k_l}} \left( \frac{s }{\sum_{\mathcal{S}_i}s} \right) \geq  0 \right] \\
 &  \leq &  \sum_{s\in\mathcal{S}_i, s \neq 0 }  \Pr \left[  O^s_i(l)   -
2^{k_l-1} \pi_s^i \geq     \sqrt{L\log t_n}
 (\sqrt{2} - 1) \sqrt{2^{k_l}} \left( \frac{1 }{\sum_{\mathcal{S}_i}s} \right) \right] \\
  &  \leq &  \sum_{s\in\mathcal{S}_i}  \Pr \left[  O^s_i(l)   -
2^{k_l-1} \pi_s^i \geq     \sqrt{L\log t_n}
 (\sqrt{2} - 1) \sqrt{2^{k_l}} \left( \frac{1 }{\sum_{\mathcal{S}_i}s} \right) \right] \\
 &  =  &   |\mathcal{S}_i| \left(1+ \frac{(\sqrt{2} - 1)\epsilon_i \sqrt{L
\log t_n }}{10 \sum_{s \in {S}_i} s} \frac{1}{\sqrt{2^{k_l}}}
\right) N_{\mathbf{q}^i}
t_n^{-((3-2\sqrt{2})L\epsilon_i)/(20(\sum_{s \in {S}_i} s)^2))} .
\end{eqnarray}
Thus we have
\begin{eqnarray}
& & Pr \left[ w_i \bar{s}_{i}(t_n)\geq w_i\mu_i + \sqrt{L w_i \log
t_n} \right] \\
 & \leq & K |\mathcal{S}_i|  N_{\mathbf{q}^i}
t_n^{-((3-2\sqrt{2})L\epsilon^i)/(20(\sum_{s \in {S}_i} s)^2))} \nonumber \\
 && + |\mathcal{S}_i| \frac{\sqrt{2}\epsilon_i \sqrt{L \log
t_n }}{10 \sum_{s \in {S}_i} s} N_{\mathbf{q}^i}
t_n^{-(3-2\sqrt{2})(L\epsilon^i/(20(\sum_{s \in {S}_i} s)^2))} \\
& = &    \left(  K +       \frac{\sqrt{2}\epsilon_i \sqrt{L \log t_n
}}{10 \sum_{s \in {S}_i} s}             \right) |\mathcal{S}_i|
N_{\mathbf{q}^i} t_n^{-(3-2\sqrt{2})(L\epsilon^i/(20(\sum_{s \in
{S}_i} s)^2))} \\ \label{eqn:44} & \leq  &    \left(  \frac{\log
t_n}{\log 2} + \frac{\sqrt{2}\epsilon_i \sqrt{L \log t_n }}{10
\sum_{s \in {S}_i} s} \right) |\mathcal{S}_i| N_{\mathbf{q}^i}
t_n^{-(3-2\sqrt{2})(L\epsilon^i/(20(\sum_{s \in {S}_i} s)^2))} \\
& \leq  &    \left(  \frac{1}{\log 2} + \frac{\sqrt{2}\epsilon_i
\sqrt{L }}{10 \sum_{s \in {S}_i} s} \right) |\mathcal{S}_i|
N_{\mathbf{q}^i} t_n^{1/2-(3-2\sqrt{2})(L\epsilon^i/(20(\sum_{s \in
{S}_i} s)^2))},
\end{eqnarray}
where (\ref{eqn:44}) follows from the fact $ K \leq \log_2 t_n$.
Since $L \geq  \frac{30 r_{\max}^2 }{( 3 - 2\sqrt{2})\epsilon_i}$ ,
 we arrive at
\begin{eqnarray}
\Pr [\bar{s}_{i}(t_n)\geq \mu_i + C_{t_n,w_i}] \leq \left(
\frac{1}{\log 2} + \frac{\sqrt{2}\epsilon_i \sqrt{L }}{10 \sum_{s
\in {S}_i} s} \right) |\mathcal{S}_i| N_{\mathbf{q}^i} t_n^{-1}.
\end{eqnarray}
Similarly, it can be shown that
 \begin{eqnarray}
\Pr [ \bar{s}_j(t_n) \leq \mu_j - C_{t_n,w_j}] \leq \left(
\frac{1}{\log 2} + \frac{\sqrt{2}\epsilon_j \sqrt{L }}{10 \sum_{s
\in {S}_j} s} \right) |\mathcal{S}_j| N_{\mathbf{q}^i} t_n^{-1}.
\end{eqnarray}
Thus
 \begin{eqnarray}\label{eqn:upperboundij}
\Pr[i,j,n] \leq \left[ \left( \frac{1}{\log 2} +
\frac{\sqrt{2}\epsilon_j \sqrt{L }}{10 \sum_{s \in {S}_j} s} \right)
|\mathcal{S}_j|  + \left( \frac{1}{\log 2} +
\frac{\sqrt{2}\epsilon_i \sqrt{L }}{10 \sum_{s \in {S}_i} s} \right)
|\mathcal{S}_i|  \right] N_{\mathbf{q}^i}  t_n ^{-1}.
\end{eqnarray}
Thus the regret caused by engaging bad arms in the $n$th
exploitation epoch is bounded by
\begin{eqnarray}\label{eqn:51}
4^{n-1}2  \sum_{j= 2}^N( \mu_{\sigma(1)} - \mu_{\sigma(j)} ) \left[
\left( \frac{1}{\log 2} + \frac{\sqrt{2}\epsilon_j \sqrt{L }}{10
\sum_{s \in {S}_j} s} \right) |\mathcal{S}_j|  + \left(
\frac{1}{\log 2} + \frac{\sqrt{2}\epsilon_1 \sqrt{L }}{10 \sum_{s
\in {S}_1} s} \right) |\mathcal{S}_1|  \right] \frac{1}{ \pi_{\min}}
t_n ^{-1}.
\end{eqnarray}
By (\ref{eqn:ubexploitation}) and $t_n \geq \frac{2}{3} 4^{n-1}$,
the bound in (\ref{eqn:51}) becomes
\begin{eqnarray}\label{eqn:regretexploitation}
3\lceil \log_4 (\frac{3}{2} (t- N) +1) \rceil   \frac{1}{
\pi_{\min}}\sum_{j= 2}^N( \mu_{\sigma(1)} - \mu_{\sigma(j)} )\sum_{
k = 1,j}  \left( \frac{1}{\log 2} + \frac{\sqrt{2}\epsilon_k \sqrt{L
}}{10 \sum_{s \in {S}_k} s} \right) |\mathcal{S}_k|  .
\end{eqnarray}
Combining~(\ref{eqn:regreform})~(\ref{eqn:firstterm})~(\ref{eqn:MPRploration})~(\ref{eqn:regretexploitation}),
we arrive at the upper bound of regret given in
(\ref{eqn:totalregretsingle}).

We point out that the same Chernoff bound given in Lemma~$2$ is also
used in~\cite{Tekin:10} to handle the \emph{rested} Markovian reward
MAB problem. Note that the Cheroff bound in~\cite{Gillman:98}
requires that all the observations used in calculating the sample
means ($\bar{s}_{i}$ and $\bar{s}_{j}$ in (\ref{eqn:twoevents})) are
from a continuously evolving Markov process. This condition is
naturally satisfied in the rested MAB problem. However, for the
restless MAB problem considered here, the sample means are
calculated using observations from multiple epochs, which are
noncontiguous segments of the Markovian sample path. As detailed in
the above proof, the desired bound on the probabilities of the
events in (\ref{eqn:twoevents}) is ensured by the carefully chosen
(growing) lengths of the exploration and exploitation epochs.

\section*{Appendix B. Proof of Theorem~\ref{thm:singleplayerarbitraryclose} and Theorem~\ref{thm:singleplayerarbitrarycloseM}}\label{sec:AppendixB}
Recall in Theorem~$1$ and Theorem~$3$, $L$ and $D$ are fixed \emph{a
priori}. Now we choose $L(t) \to \infty$ as $t \to \infty$ and
$\frac{D(t)}{L(t)} \to \infty$ as $t \to \infty$. By the same
reasoning in the proof of Theorem~$\ref{thm:singleuser}$, the regret
has three parts: The regret caused by arm switching, the regret
caused by playing bad arms in the exploration epochs, and the regret
caused by playing bad arms in the exploitation epochs. It will be
shown that each part part of the regret is on a lower order or on
the same order of $f(t)\log t$.

The number of arm switchings is upper bounded by $N \log_2(t/N+1)$.
So the regret caused by arm switching is upper bounded by
\begin{eqnarray}
N  \log_2(t/N+1) A_{\max}.
\end{eqnarray}
Since $f(t) \to \infty$ as $t \to \infty$, we have
\begin{eqnarray}
\lim_{t \to \infty} \frac{N  \log_2(t/N+1)  \max_i A_i}{f(t)\log t}
= 0.
\end{eqnarray}
Thus the regret caused by arm switching is on a lower order than
$f(t)\log t$.

 The regret caused by playing bad arms in the exploration
epochs is bounded by
\begin{eqnarray}
\frac{1}{3}[4(3D(t) \log t +1)-1]   \left( \lceil \frac{N}{M} \rceil
\sum_{i = 1}^M \mu_{\sigma(i)} -
\sum_{i=1}^{N}\mu_{\sigma(i)}\right).
\end{eqnarray}
Thus the regret caused by playing bad arms in the exploration epochs
is on the same order of $f(t)\log t$.

For the regret caused by playing bad arms in the exploitation
epochs, it is shown below that the time spent on a bad arm $i$ can
be bounded by a constant independent of $t$. Since
$\frac{D(t)}{L(t)} \to \infty $ as $t \to \infty$, there exists a
time $t_1$ such that $\forall t \geq t_1$, $D(t) \geq
\frac{4L(t)}{(\mu_{\sigma(1)} - \mu_{\sigma(2)})^2}$. There also
exists a time $t_2$ such that $\forall t \geq t_2$, $L(t) \geq
 \frac{70 r_{\max}^2 }{( 3 - 2\sqrt{2})\epsilon_{\min}}
$. The time spent on playing bad arms before $t_3 = \max(t_1,t_2)$
is at most $t_3$, and the caused regret is at most $( \sum_{i=
1}^M\mu_{\sigma(i)})t_3$. After $t_3$, the time spent on each bad
arm $i$ is upper bounded by (following similar reasoning from
(\ref{eqn:twothreeevents}) to (\ref{eqn:regretexploitation}))
\begin{eqnarray}
\frac{\pi^2}{2}
\frac{|\mathcal{S}_i|+|\mathcal{S}_{\sigma(1)}|}{\pi_{\min}}(1+
\frac{\epsilon_{\max} \sqrt{L(t_5)}}{10s_{\min}}).
\end{eqnarray}
An upper bound for the corresponding regret is
\begin{eqnarray}
\frac{\pi^2}{2} \sum_{j = 1}^M\sum_{j = M+1}^N( \mu_{\sigma(j)} -
\mu_{\sigma(i)} )\sum_{ k = 1,j}  \left( \frac{1}{\log 2} +
\frac{\sqrt{2}\epsilon_k \sqrt{L }}{10 \sum_{s \in {S}_k} s} \right)
|\mathcal{S}_k| \frac{1}{ \pi_{\min}},
\end{eqnarray}
which is a constant independent of time $t$. Thus the regret caused
by playing bad arms in the exploration epochs is on a lower order
than $f(t)\log t$.

Because each part of the regret is on a lower order than or on the
same order of $f(t)\log t $, the total regret is on the same order
of $f(t)\log t$.

\section*{Appendix C. Proof of Theorem~\ref{thm:singleuserM}}

For the case of $M>1$, we first rewrite the definition of regret as
\begin{eqnarray}
r_{\Phi}(t) & = & t \sum_{i = 1}^M \mu_{\sigma(i)}  -
\mbbE_{\Phi}R(t)
\\ & = & \sum_{i=1}^{N} [\mu_i \mbbE[T_i(t)] -
\mbbE[\sum_{n=1}^{T_i(t)}s_i(t_i(n))]] + \left[  t \sum_{i = 1}^M
\mu_{\sigma(i)} - \sum_{i=1}^{N} \mu_i \mbbE[T_i(t)]\right].
\label{eqn:regreformM}
\end{eqnarray}
To show that the regret has a logarithmic order, it is sufficient to
show that the two terms in (\ref{eqn:regreformM})
 have logarithmic orders. The first term in (\ref{eqn:regreformM}) can be considered as the
regret caused by transient effect. The second term can be considered
as the regret caused by engaging a bad arm. Similar to what we have
done for $M=1$, we bound the regret caused by transient effect based
on Lemma $1$ and upper bounds on numbera of epochs in
(\ref{eqn:ubexploration}) and (\ref{eqn:ubexploitation}). An
logarithmic upper bound of the first term in (\ref{eqn:regreformM})
is
\begin{eqnarray}\label{eqn:firsttermM}
\sum_{i=1}^{N} [\mu_i \mbbE[T_i(t)] -
\mbbE[\sum_{n=1}^{T_i(t)}s_i(t_i(n))]] \leq (M\lceil \log_4
(\frac{3}{2} (t- N) +1) \rceil
 +N(\lfloor \log _4 (3D \log t +1) \rfloor + 1) )A_{\max}.
\end{eqnarray}

Next we show that the second term of (\ref{eqn:regreformM}) has a
logarithmic order by bounding the total time spent on the bad arms.
We first bound the time spent on the bad arms during the exploration
epochs. By (\ref{eqn:exploration}) the regret caused by playing bad
arms in the exploration epochs is
\begin{eqnarray}\label{eqn:MPRplorationM}
\frac{1}{3}[4(3D \log t +1)-1]   \left( \lceil \frac{N}{M} \rceil
\sum_{i = 1}^M \mu_{\sigma(i)} -
\sum_{i=1}^{N}\mu_{\sigma(i)}\right).
\end{eqnarray}

Next, we bound the time spent on the bad arms during the
exploitation epochs. By (\ref{eqn:upperboundij}) the regret caused
by engaging bad arms in the $n$th exploitation epoch is bounded by
\begin{eqnarray}\label{eqn:51}
4^{n-1}2  \sum_{i = 1}^M \sum_{j= 2}^N( \mu_{\sigma(i)} -
\mu_{\sigma(j)} ) \left[ \left( \frac{1}{\log 2} +
\frac{\sqrt{2}\epsilon_j \sqrt{L }}{10 \sum_{s \in {S}_j} s} \right)
|\mathcal{S}_j|  + \left( \frac{1}{\log 2} +
\frac{\sqrt{2}\epsilon_1 \sqrt{L }}{10 \sum_{s \in {S}_1} s} \right)
|\mathcal{S}_1|  \right] \frac{1}{ \pi_{\min}} t_n ^{-1}.
\end{eqnarray}
By (\ref{eqn:ubexploitation}) and $t_n \geq \frac{2}{3} 4^{n-1}$,
the bound in (\ref{eqn:51}) becomes
\begin{eqnarray}\label{eqn:regretexploitationM}
3\lceil \log_4 (\frac{3}{2} (t- N) +1) \rceil   \frac{1}{
\pi_{\min}}\sum_{j = 1}^M \sum_{i= M+1}^N( \mu_{\sigma(j)} -
\mu_{\sigma(i)} )\sum_{ k = j,i}  \left( \frac{1}{\log 2} +
\frac{\sqrt{2}\epsilon_k \sqrt{L }}{10 \sum_{s \in {S}_k} s} \right)
|\mathcal{S}_k|  .
\end{eqnarray}
Combining~(\ref{eqn:regreformM})~(\ref{eqn:firsttermM})~(\ref{eqn:MPRplorationM})~(\ref{eqn:regretexploitationM}),
we arrive at the upper bound of regret given in
(\ref{eqn:totalregretsingleM}).

\section*{Appendix D. Proof of Theorem~$5$}
We first rewrite the definition of regret as
\begin{eqnarray}\label{eqn:MPregreform}
r_{\Phi}(t) & =  & t\sum_{i=1}^{M}\mu_{\sigma(i)}  -
\mbbE_{\Phi}R(t) \\   & = &\sum_{i=1}^{N} [\mu_i \mbbE[T_i(t)] -
\mbbE[\sum_{n=1}^{T_i(t)}s_i(t_i(n))]] +\left[
t\sum_{i=1}^{M}\mu_{\sigma(i)}- \sum_{i=1}^{N} \mu_i \mbbE[T_i(t)]
\right].\label{eqn:regreform1}
\end{eqnarray}

Using Lemma~$1$ the first term in (\ref{eqn:regreform1}) can be
bounded by the following constant under the endogenous restless
model (it is zero under the exogenous model):
\begin{eqnarray}\label{eqn:endo12}
(M\lceil \log_4 (\frac{3t}{2M} +1) \rceil
 +  N(\lfloor \log _4 (3D \log t +1) \rfloor + 1) )M A_{\max},
\end{eqnarray}
which has a logarithmic order.

 We are going to show that the second
term in (\ref{eqn:MPregreform}) has a logarithmic order. It will be
verified by bounding regret in both exploitation and exploration
epochs by logarithmic order.

The upper bound on $T_{O}(t)$ in (\ref{eqn:exploration}) still holds
and consequently the regret caused by engaging bad arms in the
exploration epochs by time $t$ is upper bounded by
\begin{eqnarray}\label{eqn:MPRploration1}
\frac{1}{3}[4(3D \log t +1)-1]  \left(N
\sum_{i=1}^{M}\mu_{\sigma(i)} - M
\sum_{i=1}^{N}\mu_{\sigma(i)}\right).
\end{eqnarray}

The second reason for regret in the second term of
(\ref{eqn:MPregreform}) is not playing the expected arms in the
exploitation epochs. If in the $m$th subepoch player $k$ plays the
$(m+k) \oslash M$ best arm, then every time the best $M$ arms are
played and there is no conflict. But arm $a^*_{(m+k) \oslash M}$ may
not be the $(m+k) \oslash M$ best arm. Bounding the probabilities of
mistakes can lead to an upper bound on the regret caused in the
exploitation epochs.

We adopt the same notations in Appendix~A. The upper bound on
$\Pr[i,j,n]$ in (\ref{eqn:upperboundij}) still holds. Since
different subepochs in the exploitation epochs are symmetric, the
expected regret in different subepochs are the same. In the first
subepoch, player $k$ aims at arm $\sigma(k)$. In the model where no
player in conflict gets any reward, player $k$ failing to identify
arm $\sigma(k)$ in the first subepoch of the $n$th exploitation
epoch can lead to a regret no more than $\sum_{m = 1}^M 2\mu_{m}
\times 4^{n-1} $. In the model where players share the reward,
player $k$ failing to identify arm $\sigma(k)$ in the first subepoch
of the $n$th exploitation epoch can lead to a regret no more than
$2\mu_{k} \times 4^{n-1} $. Thus an upper bound for regret in the
$n$th exploitation epoch for no reward conflict model can be
obtained as
\begin{eqnarray}
4^{n-1}2 M t_n ^{-1} \frac{1}{ \pi_{\min}} \sum_{m = 1}^M \mu_{m}
\sum_{j = 1}^{M} \sum_{i = 1, i \neq j}^{N} \sum_{ k = i,j} \left(
\frac{1}{\log 2} + \frac{\sqrt{2}\epsilon_k \sqrt{L }}{10 \sum_{s
\in {S}_k} s} \right) |\mathcal{S}_k| .
\end{eqnarray}
and an upper bound for regret in the $n$th exploitation epoch for
sharing reward model can be obtained as
\begin{eqnarray}
4^{n-1}2 M t_n ^{-1} \frac{1}{ \pi_{\min}} \sum_{j = 1}^{M}
\mu_{\sigma(j)} \sum_{i = 1, i \neq j}^{N} \sum_{ k = i,j} \left(
\frac{1}{\log 2} + \frac{\sqrt{2}\epsilon_k \sqrt{L }}{10 \sum_{s
\in {S}_k} s} \right) |\mathcal{S}_k| .
\end{eqnarray}
By time $t$, we have
\begin{eqnarray}\label{eqn:ubexploitation1}
n_I(t) \leq \lceil \log_4 (\frac{3t}{2M} +1) \rceil.
\end{eqnarray}
From the upper bound on the number of the exploitation epochs given
in (\ref{eqn:ubexploitation}), and also the fact that $t_n \geq
\frac{2}{3} 4^{n-1}$, we have the following upper bound on regret
caused in the exploitation epochs under no reward conflict model by
time $t$ (Denoted by $r_{\Phi,I}(t)$):
\begin{eqnarray} \label{eqn:MPUPI}
r_{\Phi,I}(t) &\leq &3 M \lceil \log_4 (\frac{3t}{2M} +1) \rceil
\sum_{m = 1}^M \mu_{m} \frac{1}{ \pi_{\min}}  \sum_{j = 1}^{M}
\sum_{i = 1, i \neq j}^{N} \sum_{ k = i,j} \left( \frac{1}{\log 2} +
\frac{\sqrt{2}\epsilon_k \sqrt{L }}{10 \sum_{s \in {S}_k} s} \right)
|\mathcal{S}_k| .
\end{eqnarray}
and the upper bound under sharing reward conflict model
\begin{eqnarray} \label{eqn:MPUPI1}
r_{\Phi,I}(t) &\leq &3 M \lceil \log_4 (\frac{3t}{2M} +1) \rceil
 \frac{1}{ \pi_{\min}}  \sum_{j = 1}^{M} \mu_{\sigma(j)}
\sum_{i = 1, i \neq j}^{N} \sum_{ k = i,j} \left( \frac{1}{\log 2} +
\frac{\sqrt{2}\epsilon_k \sqrt{L }}{10 \sum_{s \in {S}_k} s} \right)
|\mathcal{S}_k| .
\end{eqnarray}

Combining~(\ref{eqn:MPregreform})~(\ref{eqn:endo12})~(\ref{eqn:MPRploration})~(\ref{eqn:MPUPI})~(\ref{eqn:MPUPI1}),
we arrive at the upper bounds of regret given in (\ref{eqn:2}).

\section*{Appendix E. Proof of Theorem~\ref{thm:multipleplayersarbitraryclose}}

We set  $L(t) \to \infty$ as $t \to \infty$ and $\frac{D(t)}{L(t)}
\to \infty$ as $t \to \infty$. The regret has three parts: the
transient effect of arms, the regret caused by playing bad arms in
the exploration epochs, and the regret caused by mistakes in the
exploitation epochs. It will be shown that each part of the regret
is on a lower order or at the same order of $f(t)\log t$. The
transient effect of arms is the same as in Theorem~$3$. Thus it is
upper bounded by a constant under the exogenous restless model and
on the order of $\log t$ under the endogenous restless model. Thus
it is on a lower order than $f(t)\log t$

The regret caused by playing bad arms in the exploration epochs is
bounded by
\begin{eqnarray}\label{eqn:1}
\frac{1}{3}[4(3D(t) \log t +1)-1]  \left( N
\sum_{i=1}^{M}\mu_{\sigma(i)} - M
\sum_{i=1}^{N}\mu_{\sigma(i)}\right).
\end{eqnarray}
Since $D(t) = f(t)$, regret in (\ref{eqn:1}) is on the same order
$f(t)\log t$.

For the regret caused by playing bad arms in the exploitation
epochs, it is shown below that the time spent on a bad arm $i$ can
be bounded by a constant independent of $t$.

Since $\frac{D(t)}{L(t)} \to \infty $ as $t \to \infty$, there
exists a time $t_1$ such that $\forall t \geq t_1$, $D(t) \geq
\frac{4L(t)}{(\min_{j\leq M}( \mu_{\sigma(j)} -
\mu_{\sigma(j+1)}))^2}$. There also exists a time $t_2$ such that
$\forall t \geq t_2$, $L(t)  \geq
 \frac{70 r_{\max}^2 }{( 3 - 2\sqrt{2})\epsilon_{\min}}
$. The time spent on playing bad arms before $t_3 = \max(t_1,t_2)$
is at most $t_3$, and the time spent on playing bad arms after $t_3$
is also bounded by a finite constant, which can be found in a
similar manner in Appendix B. Thus the regret caused by mistakes in
the exploitation epochs is on a lower order than $f(t)\log t $.

Because each part of the regret is on a lower order than or on the
same order of $f(t)\log t$, the total regret is on the same order of
$f(t)\log t$.

\section*{Appendix F. Proof of Theorem~\ref{thm:5}}
At each time, regret incurs if one of the following three events
happens: (i) at least one player incorrectly identifies the set of
$M$ best arms in the exploitation sequence, (ii) at least one player
is exploring, (iii) at least a collision occurs among the players.
In the following, we will bound the expected number of the
occurrences of these three events by the logarithmic order with
time.

We first consider events (i) and (ii). Define a singular slot as the
time slot in which either (i) or (ii) occurs. Based on the previous
theorems, the local expected number of learning mistakes at each
players is bounded by the logarithmic order with time. Furthermore,
the cardinality of the local exploration sequence at each player is
also bounded by the logarithmic order with time. We thus have that
the expected number of singular slots is bounded by the logarithmic
order with time, \ie the expected number of the occurrences of
events (i) and (ii) is bounded by the logarithmic order with time.

To prove the theorem, it remains to show that the expected number of
collisions in all non-singular slots is also bounded by the
logarithmic order with time. Consider the contiguous period
consisting of all slots between two successive singular slots.
During this period, all players correctly identify the $M$ best arms
and a collision occurs if and only if at least two players choose
the same arm. Due to the randomized arm selection after each
collision, it is clear that, in this period, the expected number of
collisions before all players are orthogonalized into the $M$ best
arms is bounded by a constant uniform over time. Since the expected
number of such periods has the same order as the expected number of
singular slots, the expected number of such periods is bounded by
the logarithmic order with time. The expected number of collisions
over all such periods is thus bounded by the logarithmic order with
time, \ie the expected number of collisions in all non-singular
slots is bounded by the logarithmic order with time. We thus proved
the theorem.

\section*{Appendix G. Transition Matrix for Simulation in Sec.~\ref{sec:simulation}}

The transition matrix for arm $1$ is
\begin{eqnarray*} {\tiny \left[  \begin{matrix}0.0401  \text{ }    0.0787  \text{ }    0.0188  \text{ }    0.0572  \text{ }    0.0531  \text{ }    0.0569  \text{ }    0.0491  \text{ }    0.0145  \text{ }    0.0583  \text{ }    0.0244  \text{ }    0.0195  \text{ }    0.0694  \text{ }    0.0654  \text{ }    0.0256  \text{ }    0.0656  \text{ }    0.0707  \text{ }    0.0809  \text{ }    0.0283  \text{ }    0.0322  \text{ }    0.0914  \\
0.0787  \text{ }    0.0448  \text{ }    0.0677  \text{ }    0.0165  \text{ }    0.0674  \text{ }    0.0545  \text{ }    0.0537  \text{ }    0.0622  \text{ }    0.0653  \text{ }    0.0491  \text{ }    0.0163  \text{ }    0.0613  \text{ }    0.0679  \text{ }    0.0580  \text{ }    0.0216  \text{ }    0.0580  \text{ }    0.0042  \text{ }    0.0219  \text{ }    0.0650  \text{ }    0.0659  \\
0.0188  \text{ }    0.0677  \text{ }    0.0885  \text{ }    0.0107  \text{ }    0.0518  \text{ }    0.0687  \text{ }    0.0243  \text{ }    0.0997  \text{ }    0.0562  \text{ }    0.0663  \text{ }    0.0674  \text{ }    0.0005  \text{ }    0.1048  \text{ }    0.0571  \text{ }    0.0562  \text{ }    0.0411  \text{ }    0.0125  \text{ }    0.0308  \text{ }    0.0593  \text{ }    0.0176  \\
0.0572  \text{ }    0.0165  \text{ }    0.0107  \text{ }    0.0083  \text{ }    0.0520  \text{ }    0.0802  \text{ }    0.0310  \text{ }    0.0731  \text{ }    0.0967  \text{ }    0.0697  \text{ }    0.0773  \text{ }    0.0630  \text{ }    0.0222  \text{ }    0.0229  \text{ }    0.0910  \text{ }    0.0036  \text{ }    0.0925  \text{ }    0.0180  \text{ }    0.0049  \text{ }    0.1091  \\
0.0531  \text{ }    0.0674  \text{ }    0.0518  \text{ }    0.0520  \text{ }    0.0025  \text{ }    0.0801  \text{ }    0.0935  \text{ }    0.0495  \text{ }    0.0076  \text{ }    0.0097  \text{ }    0.0318  \text{ }    0.1150  \text{ }    0.1095  \text{ }    0.0355  \text{ }    0.0664  \text{ }    0.0160  \text{ }    0.0449  \text{ }    0.0321  \text{ }    0.0748  \text{ }    0.0069  \\
0.0569  \text{ }    0.0545  \text{ }    0.0687  \text{ }    0.0802  \text{ }    0.0801  \text{ }    0.0551  \text{ }    0.0741  \text{ }    0.0426  \text{ }    0.0085  \text{ }    0.0405  \text{ }    0.0642  \text{ }    0.0234  \text{ }    0.0055  \text{ }    0.0196  \text{ }    0.0744  \text{ }    0.0095  \text{ }    0.0804  \text{ }    0.0468  \text{ }    0.0559  \text{ }    0.0590  \\
0.0491  \text{ }    0.0537  \text{ }    0.0243  \text{ }    0.0310  \text{ }    0.0935  \text{ }    0.0741  \text{ }    0.0003  \text{ }    0.0852  \text{ }    0.0199  \text{ }    0.0733  \text{ }    0.1019  \text{ }    0.0055  \text{ }    0.0491  \text{ }    0.0898  \text{ }    0.0580  \text{ }    0.1040  \text{ }    0.0531  \text{ }    0.0114  \text{ }    0.0193  \text{ }    0.0036  \\
0.0145  \text{ }    0.0622  \text{ }    0.0997  \text{ }    0.0731  \text{ }    0.0495  \text{ }    0.0426  \text{ }    0.0852  \text{ }    0.0028  \text{ }    0.0722  \text{ }    0.0563  \text{ }    0.0690  \text{ }    0.0234  \text{ }    0.0237  \text{ }    0.0496  \text{ }    0.0626  \text{ }    0.0737  \text{ }    0.0442  \text{ }    0.0496  \text{ }    0.0344  \text{ }    0.0117  \\
0.0583  \text{ }    0.0653  \text{ }    0.0562  \text{ }    0.0967  \text{ }    0.0076  \text{ }    0.0085  \text{ }    0.0199  \text{ }    0.0722  \text{ }    0.0189  \text{ }    0.0649  \text{ }    0.1013  \text{ }    0.0736  \text{ }    0.0413  \text{ }    0.0371  \text{ }    0.0376  \text{ }    0.0961  \text{ }    0.0072  \text{ }    0.0672  \text{ }    0.0507  \text{ }    0.0195  \\
0.0244  \text{ }    0.0491  \text{ }    0.0663  \text{ }    0.0697  \text{ }    0.0097  \text{ }    0.0405  \text{ }    0.0733  \text{ }    0.0563  \text{ }    0.0649  \text{ }    0.1114  \text{ }    0.0887  \text{ }    0.0175  \text{ }    0.0031  \text{ }    0.0690  \text{ }    0.0260  \text{ }    0.0090  \text{ }    0.0596  \text{ }    0.1015  \text{ }    0.0405  \text{ }    0.0195  \\
0.0195  \text{ }    0.0163  \text{ }    0.0674  \text{ }    0.0773  \text{ }    0.0318  \text{ }    0.0642  \text{ }    0.1019  \text{ }    0.0690  \text{ }    0.1013  \text{ }    0.0887  \text{ }    0.0747  \text{ }    0.0866  \text{ }    0.0428  \text{ }    0.0089  \text{ }    0.0152  \text{ }    0.0428  \text{ }    0.0287  \text{ }    0.0337  \text{ }    0.0083  \text{ }    0.0212  \\
0.0694  \text{ }    0.0613  \text{ }    0.0005  \text{ }    0.0630  \text{ }    0.1150  \text{ }    0.0234  \text{ }    0.0055  \text{ }    0.0234  \text{ }    0.0736  \text{ }    0.0175  \text{ }    0.0866  \text{ }    0.0961  \text{ }    0.0235  \text{ }    0.0617  \text{ }    0.0261  \text{ }    0.1233  \text{ }    0.0238  \text{ }    0.0417  \text{ }    0.0177  \text{ }    0.0469  \\
0.0654  \text{ }    0.0679  \text{ }    0.1048  \text{ }    0.0222  \text{ }    0.1095  \text{ }    0.0055  \text{ }    0.0491  \text{ }    0.0237  \text{ }    0.0413  \text{ }    0.0031  \text{ }    0.0428  \text{ }    0.0235  \text{ }    0.0611  \text{ }    0.0354  \text{ }    0.0705  \text{ }    0.0817  \text{ }    0.0815  \text{ }    0.0221  \text{ }    0.0590  \text{ }    0.0298  \\
0.0256  \text{ }    0.0580  \text{ }    0.0571  \text{ }    0.0229  \text{ }    0.0355  \text{ }    0.0196  \text{ }    0.0898  \text{ }    0.0496  \text{ }    0.0371  \text{ }    0.0690  \text{ }    0.0089  \text{ }    0.0617  \text{ }    0.0354  \text{ }    0.0150  \text{ }    0.1057  \text{ }    0.0951  \text{ }    0.0401  \text{ }    0.1038  \text{ }    0.0466  \text{ }    0.0235  \\
0.0656  \text{ }    0.0216  \text{ }    0.0562  \text{ }    0.0910  \text{ }    0.0664  \text{ }    0.0744  \text{ }    0.0580  \text{ }    0.0626  \text{ }    0.0376  \text{ }    0.0260  \text{ }    0.0152  \text{ }    0.0261  \text{ }    0.0705  \text{ }    0.1057  \text{ }    0.0657  \text{ }    0.0436  \text{ }    0.0088  \text{ }    0.0665  \text{ }    0.0035  \text{ }    0.0350  \\
0.0707  \text{ }    0.0580  \text{ }    0.0411  \text{ }    0.0036  \text{ }    0.0160  \text{ }    0.0095  \text{ }    0.1040  \text{ }    0.0737  \text{ }    0.0961  \text{ }    0.0090  \text{ }    0.0428  \text{ }    0.1233  \text{ }    0.0817  \text{ }    0.0951  \text{ }    0.0436  \text{ }    0.0289  \text{ }    0.0080  \text{ }    0.0368  \text{ }    0.0400  \text{ }    0.0182  \\
0.0809  \text{ }    0.0042  \text{ }    0.0125  \text{ }    0.0925  \text{ }    0.0449  \text{ }    0.0804  \text{ }    0.0531  \text{ }    0.0442  \text{ }    0.0072  \text{ }    0.0596  \text{ }    0.0287  \text{ }    0.0238  \text{ }    0.0815  \text{ }    0.0401  \text{ }    0.0088  \text{ }    0.0080  \text{ }    0.0110  \text{ }    0.0231  \text{ }    0.2495  \text{ }    0.0459  \\
0.0283  \text{ }    0.0219  \text{ }    0.0308  \text{ }    0.0180  \text{ }    0.0321  \text{ }    0.0468  \text{ }    0.0114  \text{ }    0.0496  \text{ }    0.0672  \text{ }    0.1015  \text{ }    0.0337  \text{ }    0.0417  \text{ }    0.0221  \text{ }    0.1038  \text{ }    0.0665  \text{ }    0.0368  \text{ }    0.0231  \text{ }    0.1075  \text{ }    0.1164  \text{ }    0.0409  \\
0.0322  \text{ }    0.0650  \text{ }    0.0593  \text{ }    0.0049  \text{ }    0.0748  \text{ }    0.0559  \text{ }    0.0193  \text{ }    0.0344  \text{ }    0.0507  \text{ }    0.0405  \text{ }    0.0083  \text{ }    0.0177  \text{ }    0.0590  \text{ }    0.0466  \text{ }    0.0035  \text{ }    0.0400  \text{ }    0.2495  \text{ }    0.1164  \text{ }    0.0103  \text{ }    0.0118  \\
0.0914  \text{ }    0.0659  \text{ }    0.0176  \text{ }    0.1091
\text{ }    0.0069  \text{ }    0.0590  \text{ }    0.0036  \text{ }
0.0117  \text{ }    0.0195  \text{ }    0.0195  \text{ }    0.0212
\text{ }    0.0469  \text{ }    0.0298  \text{ }    0.0235  \text{ }
0.0350  \text{ }    0.0182  \text{ }    0.0459  \text{ }    0.0409
\text{ }    0.0118  \text{ }    0.3227
\end{matrix} \right] }
\end{eqnarray*}

The transition matrix for arm $2$ is
\begin{eqnarray*} {\tiny \left[  \begin{matrix}
0.0266  \text{ }    0.0415  \text{ }    0.0248  \text{ }    0.0896  \text{ }    0.0596  \text{ }    0.0615  \text{ }    0.0847  \text{ }    0.0106  \text{ }    0.0175  \text{ }    0.0734  \text{ }    0.0361  \text{ }    0.0888  \text{ }    0.0906  \text{ }    0.0118  \text{ }    0.0829  \text{ }    0.0442  \text{ }    0.0542  \text{ }    0.0112  \text{ }    0.0439  \text{ }    0.0467  \\
0.0415  \text{ }    0.0544  \text{ }    0.0473  \text{ }    0.0287  \text{ }    0.0852  \text{ }    0.0084  \text{ }    0.0224  \text{ }    0.0515  \text{ }    0.0696  \text{ }    0.0496  \text{ }    0.0397  \text{ }    0.0890  \text{ }    0.0919  \text{ }    0.0244  \text{ }    0.0427  \text{ }    0.0088  \text{ }    0.0808  \text{ }    0.0269  \text{ }    0.0845  \text{ }    0.0529  \\
0.0248  \text{ }    0.0473  \text{ }    0.0164  \text{ }    0.0902  \text{ }    0.0705  \text{ }    0.0828  \text{ }    0.0828  \text{ }    0.0492  \text{ }    0.0346  \text{ }    0.0413  \text{ }    0.0637  \text{ }    0.0804  \text{ }    0.0078  \text{ }    0.0321  \text{ }    0.0798  \text{ }    0.0250  \text{ }    0.0757  \text{ }    0.0692  \text{ }    0.0115  \text{ }    0.0151  \\
0.0896  \text{ }    0.0287  \text{ }    0.0902  \text{ }    0.0768  \text{ }    0.0505  \text{ }    0.0020  \text{ }    0.0499  \text{ }    0.0578  \text{ }    0.0023  \text{ }    0.0545  \text{ }    0.0633  \text{ }    0.0886  \text{ }    0.0395  \text{ }    0.0528  \text{ }    0.0389  \text{ }    0.0927  \text{ }    0.0429  \text{ }    0.0272  \text{ }    0.0365  \text{ }    0.0154  \\
0.0596  \text{ }    0.0852  \text{ }    0.0705  \text{ }    0.0505  \text{ }    0.0840  \text{ }    0.0984  \text{ }    0.0415  \text{ }    0.0660  \text{ }    0.0336  \text{ }    0.0713  \text{ }    0.0017  \text{ }    0.0126  \text{ }    0.0660  \text{ }    0.0249  \text{ }    0.0341  \text{ }    0.0006  \text{ }    0.0903  \text{ }    0.0715  \text{ }    0.0263  \text{ }    0.0113  \\
0.0615  \text{ }    0.0084  \text{ }    0.0828  \text{ }    0.0020  \text{ }    0.0984  \text{ }    0.0812  \text{ }    0.0611  \text{ }    0.0935  \text{ }    0.0379  \text{ }    0.0536  \text{ }    0.0779  \text{ }    0.0592  \text{ }    0.0713  \text{ }    0.0083  \text{ }    0.0052  \text{ }    0.0616  \text{ }    0.0347  \text{ }    0.0617  \text{ }    0.0110  \text{ }    0.0288  \\
0.0847  \text{ }    0.0224  \text{ }    0.0828  \text{ }    0.0499  \text{ }    0.0415  \text{ }    0.0611  \text{ }    0.0638  \text{ }    0.0540  \text{ }    0.0362  \text{ }    0.0761  \text{ }    0.0192  \text{ }    0.0696  \text{ }    0.0350  \text{ }    0.0656  \text{ }    0.0162  \text{ }    0.0684  \text{ }    0.0126  \text{ }    0.0582  \text{ }    0.0690  \text{ }    0.0139  \\
0.0106  \text{ }    0.0515  \text{ }    0.0492  \text{ }    0.0578  \text{ }    0.0660  \text{ }    0.0935  \text{ }    0.0540  \text{ }    0.0470  \text{ }    0.0404  \text{ }    0.0705  \text{ }    0.0865  \text{ }    0.0450  \text{ }    0.0264  \text{ }    0.0516  \text{ }    0.0119  \text{ }    0.0535  \text{ }    0.0694  \text{ }    0.0803  \text{ }    0.0165  \text{ }    0.0185  \\
0.0175  \text{ }    0.0696  \text{ }    0.0346  \text{ }    0.0023  \text{ }    0.0336  \text{ }    0.0379  \text{ }    0.0362  \text{ }    0.0404  \text{ }    0.0758  \text{ }    0.0893  \text{ }    0.0655  \text{ }    0.0721  \text{ }    0.0842  \text{ }    0.0803  \text{ }    0.0595  \text{ }    0.0101  \text{ }    0.0311  \text{ }    0.0158  \text{ }    0.0845  \text{ }    0.0597  \\
0.0734  \text{ }    0.0496  \text{ }    0.0413  \text{ }    0.0545  \text{ }    0.0713  \text{ }    0.0536  \text{ }    0.0761  \text{ }    0.0705  \text{ }    0.0893  \text{ }    0.0579  \text{ }    0.0394  \text{ }    0.0287  \text{ }    0.0564  \text{ }    0.0624  \text{ }    0.0566  \text{ }    0.0089  \text{ }    0.0536  \text{ }    0.0441  \text{ }    0.0050  \text{ }    0.0073  \\
0.0361  \text{ }    0.0397  \text{ }    0.0637  \text{ }    0.0633  \text{ }    0.0017  \text{ }    0.0779  \text{ }    0.0192  \text{ }    0.0865  \text{ }    0.0655  \text{ }    0.0394  \text{ }    0.0548  \text{ }    0.0033  \text{ }    0.0200  \text{ }    0.0820  \text{ }    0.0081  \text{ }    0.1193  \text{ }    0.0826  \text{ }    0.0816  \text{ }    0.0243  \text{ }    0.0311  \\
0.0888  \text{ }    0.0890  \text{ }    0.0804  \text{ }    0.0886  \text{ }    0.0126  \text{ }    0.0592  \text{ }    0.0696  \text{ }    0.0450  \text{ }    0.0721  \text{ }    0.0287  \text{ }    0.0033  \text{ }    0.0602  \text{ }    0.0345  \text{ }    0.0537  \text{ }    0.0197  \text{ }    0.0666  \text{ }    0.0065  \text{ }    0.0160  \text{ }    0.0602  \text{ }    0.0455  \\
0.0906  \text{ }    0.0919  \text{ }    0.0078  \text{ }    0.0395  \text{ }    0.0660  \text{ }    0.0713  \text{ }    0.0350  \text{ }    0.0264  \text{ }    0.0842  \text{ }    0.0564  \text{ }    0.0200  \text{ }    0.0345  \text{ }    0.0449  \text{ }    0.0559  \text{ }    0.0525  \text{ }    0.0589  \text{ }    0.0433  \text{ }    0.0657  \text{ }    0.0115  \text{ }    0.0435  \\
0.0118  \text{ }    0.0244  \text{ }    0.0321  \text{ }    0.0528  \text{ }    0.0249  \text{ }    0.0083  \text{ }    0.0656  \text{ }    0.0516  \text{ }    0.0803  \text{ }    0.0624  \text{ }    0.0820  \text{ }    0.0537  \text{ }    0.0559  \text{ }    0.0771  \text{ }    0.0108  \text{ }    0.0283  \text{ }    0.1178  \text{ }    0.0403  \text{ }    0.0674  \text{ }    0.0526  \\
0.0829  \text{ }    0.0427  \text{ }    0.0798  \text{ }    0.0389  \text{ }    0.0341  \text{ }    0.0052  \text{ }    0.0162  \text{ }    0.0119  \text{ }    0.0595  \text{ }    0.0566  \text{ }    0.0081  \text{ }    0.0197  \text{ }    0.0525  \text{ }    0.0108  \text{ }    0.1268  \text{ }    0.0481  \text{ }    0.0719  \text{ }    0.1129  \text{ }    0.0020  \text{ }    0.1193  \\
0.0442  \text{ }    0.0088  \text{ }    0.0250  \text{ }    0.0927  \text{ }    0.0006  \text{ }    0.0616  \text{ }    0.0684  \text{ }    0.0535  \text{ }    0.0101  \text{ }    0.0089  \text{ }    0.1193  \text{ }    0.0666  \text{ }    0.0589  \text{ }    0.0283  \text{ }    0.0481  \text{ }    0.0394  \text{ }    0.0645  \text{ }    0.1475  \text{ }    0.0375  \text{ }    0.0161  \\
0.0542  \text{ }    0.0808  \text{ }    0.0757  \text{ }    0.0429  \text{ }    0.0903  \text{ }    0.0347  \text{ }    0.0126  \text{ }    0.0694  \text{ }    0.0311  \text{ }    0.0536  \text{ }    0.0826  \text{ }    0.0065  \text{ }    0.0433  \text{ }    0.1178  \text{ }    0.0719  \text{ }    0.0645  \text{ }    0.0287  \text{ }    0.0104  \text{ }    0.0020  \text{ }    0.0268  \\
0.0112  \text{ }    0.0269  \text{ }    0.0692  \text{ }    0.0272  \text{ }    0.0715  \text{ }    0.0617  \text{ }    0.0582  \text{ }    0.0803  \text{ }    0.0158  \text{ }    0.0441  \text{ }    0.0816  \text{ }    0.0160  \text{ }    0.0657  \text{ }    0.0403  \text{ }    0.1129  \text{ }    0.1475  \text{ }    0.0104  \text{ }    0.0169  \text{ }    0.0344  \text{ }    0.0082  \\
0.0439  \text{ }    0.0845  \text{ }    0.0115  \text{ }    0.0365  \text{ }    0.0263  \text{ }    0.0110  \text{ }    0.0690  \text{ }    0.0165  \text{ }    0.0845  \text{ }    0.0050  \text{ }    0.0243  \text{ }    0.0602  \text{ }    0.0115  \text{ }    0.0674  \text{ }    0.0020  \text{ }    0.0375  \text{ }    0.0020  \text{ }    0.0344  \text{ }    0.1773  \text{ }    0.1946  \\
0.0467  \text{ }    0.0529  \text{ }    0.0151  \text{ }    0.0154
\text{ }    0.0113  \text{ }    0.0288  \text{ }    0.0139  \text{ }
0.0185  \text{ }    0.0597  \text{ }    0.0073  \text{ }    0.0311
\text{ }    0.0455  \text{ }    0.0435  \text{ }    0.0526  \text{ }
0.1193  \text{ }    0.0161  \text{ }    0.0268  \text{ }    0.0082
\text{ }    0.1946  \text{ }    0.1927
\end{matrix} \right] }
\end{eqnarray*}

The transition matrix for arm $3$ is
\begin{eqnarray*} {\tiny \left[  \begin{matrix}
\text{ }    0.0512  \text{ }    0.0398  \text{ }    0.0186  \text{ }    0.1012  \text{ }    0.0880  \text{ }    0.0099  \text{ }    0.0250  \text{ }    0.0948  \text{ }    0.0468  \text{ }    0.0743  \text{ }    0.0810  \text{ }    0.0130  \text{ }    0.0508  \text{ }    0.0513  \text{ }    0.0932  \text{ }    0.0251  \text{ }    0.0051  \text{ }    0.0453  \text{ }    0.0071  \\
\text{ }    0.0122  \text{ }    0.0250  \text{ }    0.0019  \text{ }    0.0156  \text{ }    0.0832  \text{ }    0.0578  \text{ }    0.0962  \text{ }    0.0830  \text{ }    0.0360  \text{ }    0.0522  \text{ }    0.0748  \text{ }    0.0140  \text{ }    0.0884  \text{ }    0.1053  \text{ }    0.0440  \text{ }    0.0094  \text{ }    0.0683  \text{ }    0.0078  \text{ }    0.0737  \\
\text{ }    0.0250  \text{ }    0.0505  \text{ }    0.0590  \text{ }    0.0413  \text{ }    0.0409  \text{ }    0.0907  \text{ }    0.0745  \text{ }    0.0572  \text{ }    0.0256  \text{ }    0.0575  \text{ }    0.0321  \text{ }    0.0725  \text{ }    0.0151  \text{ }    0.0472  \text{ }    0.0051  \text{ }    0.0558  \text{ }    0.0908  \text{ }    0.0418  \text{ }    0.0777  \\
\text{ }    0.0019  \text{ }    0.0590  \text{ }    0.0678  \text{ }    0.0932  \text{ }    0.0800  \text{ }    0.0179  \text{ }    0.0399  \text{ }    0.0393  \text{ }    0.0156  \text{ }    0.0990  \text{ }    0.0134  \text{ }    0.0293  \text{ }    0.0482  \text{ }    0.0440  \text{ }    0.0948  \text{ }    0.0406  \text{ }    0.0155  \text{ }    0.0985  \text{ }    0.0833  \\
\text{ }    0.0156  \text{ }    0.0413  \text{ }    0.0932  \text{ }    0.0061  \text{ }    0.0507  \text{ }    0.0295  \text{ }    0.0588  \text{ }    0.0905  \text{ }    0.0783  \text{ }    0.0488  \text{ }    0.0275  \text{ }    0.0807  \text{ }    0.0171  \text{ }    0.0266  \text{ }    0.0669  \text{ }    0.0320  \text{ }    0.0771  \text{ }    0.0299  \text{ }    0.0281  \\
\text{ }    0.0832  \text{ }    0.0409  \text{ }    0.0800  \text{ }    0.0507  \text{ }    0.0064  \text{ }    0.0550  \text{ }    0.0050  \text{ }    0.0462  \text{ }    0.0698  \text{ }    0.0204  \text{ }    0.0588  \text{ }    0.0313  \text{ }    0.0223  \text{ }    0.0766  \text{ }    0.0465  \text{ }    0.0512  \text{ }    0.0323  \text{ }    0.0767  \text{ }    0.0585  \\
\text{ }    0.0578  \text{ }    0.0907  \text{ }    0.0179  \text{ }    0.0295  \text{ }    0.0550  \text{ }    0.0249  \text{ }    0.0120  \text{ }    0.0223  \text{ }    0.1130  \text{ }    0.0087  \text{ }    0.1138  \text{ }    0.0928  \text{ }    0.0281  \text{ }    0.0060  \text{ }    0.0811  \text{ }    0.0286  \text{ }    0.0814  \text{ }    0.0505  \text{ }    0.0761  \\
\text{ }    0.0962  \text{ }    0.0745  \text{ }    0.0399  \text{ }    0.0588  \text{ }    0.0050  \text{ }    0.0120  \text{ }    0.0709  \text{ }    0.0565  \text{ }    0.0784  \text{ }    0.1050  \text{ }    0.0158  \text{ }    0.0043  \text{ }    0.0885  \text{ }    0.0108  \text{ }    0.0278  \text{ }    0.0529  \text{ }    0.0285  \text{ }    0.1036  \text{ }    0.0455  \\
\text{ }    0.0830  \text{ }    0.0572  \text{ }    0.0393  \text{ }    0.0905  \text{ }    0.0462  \text{ }    0.0223  \text{ }    0.0565  \text{ }    0.0159  \text{ }    0.0607  \text{ }    0.0563  \text{ }    0.0359  \text{ }    0.0259  \text{ }    0.0479  \text{ }    0.0295  \text{ }    0.0455  \text{ }    0.0584  \text{ }    0.0515  \text{ }    0.0379  \text{ }    0.0447  \\
\text{ }    0.0360  \text{ }    0.0256  \text{ }    0.0156  \text{ }    0.0783  \text{ }    0.0698  \text{ }    0.1130  \text{ }    0.0784  \text{ }    0.0607  \text{ }    0.0656  \text{ }    0.0453  \text{ }    0.0347  \text{ }    0.0105  \text{ }    0.0639  \text{ }    0.0236  \text{ }    0.0475  \text{ }    0.0415  \text{ }    0.0695  \text{ }    0.0499  \text{ }    0.0239  \\
\text{ }    0.0522  \text{ }    0.0575  \text{ }    0.0990  \text{ }    0.0488  \text{ }    0.0204  \text{ }    0.0087  \text{ }    0.1050  \text{ }    0.0563  \text{ }    0.0453  \text{ }    0.0744  \text{ }    0.0559  \text{ }    0.0487  \text{ }    0.0183  \text{ }    0.0243  \text{ }    0.0067  \text{ }    0.0421  \text{ }    0.0165  \text{ }    0.0734  \text{ }    0.0723  \\
\text{ }    0.0748  \text{ }    0.0321  \text{ }    0.0134  \text{ }    0.0275  \text{ }    0.0588  \text{ }    0.1138  \text{ }    0.0158  \text{ }    0.0359  \text{ }    0.0347  \text{ }    0.0559  \text{ }    0.0178  \text{ }    0.0155  \text{ }    0.1284  \text{ }    0.0064  \text{ }    0.0132  \text{ }    0.1452  \text{ }    0.0009  \text{ }    0.1132  \text{ }    0.0157  \\
\text{ }    0.0140  \text{ }    0.0725  \text{ }    0.0293  \text{ }    0.0807  \text{ }    0.0313  \text{ }    0.0928  \text{ }    0.0043  \text{ }    0.0259  \text{ }    0.0105  \text{ }    0.0487  \text{ }    0.0155  \text{ }    0.0740  \text{ }    0.1503  \text{ }    0.0878  \text{ }    0.0476  \text{ }    0.0127  \text{ }    0.0679  \text{ }    0.0695  \text{ }    0.0517  \\
\text{ }    0.0884  \text{ }    0.0151  \text{ }    0.0482  \text{ }    0.0171  \text{ }    0.0223  \text{ }    0.0281  \text{ }    0.0885  \text{ }    0.0479  \text{ }    0.0639  \text{ }    0.0183  \text{ }    0.1284  \text{ }    0.1503  \text{ }    0.0253  \text{ }    0.0078  \text{ }    0.0577  \text{ }    0.0046  \text{ }    0.0452  \text{ }    0.0366  \text{ }    0.0554  \\
\text{ }    0.1053  \text{ }    0.0472  \text{ }    0.0440  \text{ }    0.0266  \text{ }    0.0766  \text{ }    0.0060  \text{ }    0.0108  \text{ }    0.0295  \text{ }    0.0236  \text{ }    0.0243  \text{ }    0.0064  \text{ }    0.0878  \text{ }    0.0078  \text{ }    0.0579  \text{ }    0.0753  \text{ }    0.0707  \text{ }    0.1428  \text{ }    0.0106  \text{ }    0.0957  \\
\text{ }    0.0440  \text{ }    0.0051  \text{ }    0.0948  \text{ }    0.0669  \text{ }    0.0465  \text{ }    0.0811  \text{ }    0.0278  \text{ }    0.0455  \text{ }    0.0475  \text{ }    0.0067  \text{ }    0.0132  \text{ }    0.0476  \text{ }    0.0577  \text{ }    0.0753  \text{ }    0.0758  \text{ }    0.0487  \text{ }    0.0863  \text{ }    0.0113  \text{ }    0.0250  \\
\text{ }    0.0094  \text{ }    0.0558  \text{ }    0.0406  \text{ }    0.0320  \text{ }    0.0512  \text{ }    0.0286  \text{ }    0.0529  \text{ }    0.0584  \text{ }    0.0415  \text{ }    0.0421  \text{ }    0.1452  \text{ }    0.0127  \text{ }    0.0046  \text{ }    0.0707  \text{ }    0.0487  \text{ }    0.1239  \text{ }    0.0639  \text{ }    0.0421  \text{ }    0.0507  \\
\text{ }    0.0683  \text{ }    0.0908  \text{ }    0.0155  \text{ }    0.0771  \text{ }    0.0323  \text{ }    0.0814  \text{ }    0.0285  \text{ }    0.0515  \text{ }    0.0695  \text{ }    0.0165  \text{ }    0.0009  \text{ }    0.0679  \text{ }    0.0452  \text{ }    0.1428  \text{ }    0.0863  \text{ }    0.0639  \text{ }    0.0215  \text{ }    0.0200  \text{ }    0.0150  \\
\text{ }    0.0078  \text{ }    0.0418  \text{ }    0.0985  \text{ }    0.0299  \text{ }    0.0767  \text{ }    0.0505  \text{ }    0.1036  \text{ }    0.0379  \text{ }    0.0499  \text{ }    0.0734  \text{ }    0.1132  \text{ }    0.0695  \text{ }    0.0366  \text{ }    0.0106  \text{ }    0.0113  \text{ }    0.0421  \text{ }    0.0200  \text{ }    0.0228  \text{ }    0.0586  \\
\text{ }    0.0737  \text{ }    0.0777  \text{ }    0.0833  \text{ }
0.0281  \text{ }    0.0585  \text{ }    0.0761  \text{ }    0.0455
\text{ }    0.0447  \text{ }    0.0239  \text{ }    0.0723  \text{ }
0.0157  \text{ }    0.0517  \text{ }    0.0554  \text{ }    0.0957
\text{ }    0.0250  \text{ }    0.0507  \text{ }    0.0150  \text{ }
0.0586  \text{ }    0.0415
\end{matrix} \right] }
\end{eqnarray*}

The transition matrix for arm $4$ is
\begin{eqnarray*} {\tiny \left[  \begin{matrix}0.0541  \text{ }    0.1087  \text{ }    0.0564  \text{ }    0.0311  \text{ }    0.0663  \text{ }    0.0134  \text{ }    0.0580  \text{ }    0.0345  \text{ }    0.0239  \text{ }    0.0808  \text{ }    0.0066  \text{ }    0.0038  \text{ }    0.0628  \text{ }    0.1039  \text{ }    0.0002  \text{ }    0.0138  \text{ }    0.0820  \text{ }    0.0606  \text{ }    0.1037  \text{ }    0.0354  \\
0.1087  \text{ }    0.0879  \text{ }    0.0148  \text{ }    0.0524  \text{ }    0.0073  \text{ }    0.0300  \text{ }    0.0403  \text{ }    0.0335  \text{ }    0.0768  \text{ }    0.0253  \text{ }    0.0651  \text{ }    0.0916  \text{ }    0.0188  \text{ }    0.0305  \text{ }    0.0473  \text{ }    0.0201  \text{ }    0.0574  \text{ }    0.0742  \text{ }    0.0728  \text{ }    0.0453  \\
0.0564  \text{ }    0.0148  \text{ }    0.0073  \text{ }    0.0739  \text{ }    0.0583  \text{ }    0.1106  \text{ }    0.0338  \text{ }    0.0907  \text{ }    0.0350  \text{ }    0.1033  \text{ }    0.0850  \text{ }    0.0352  \text{ }    0.0502  \text{ }    0.0254  \text{ }    0.0001  \text{ }    0.0534  \text{ }    0.0035  \text{ }    0.0307  \text{ }    0.0543  \text{ }    0.0782  \\
0.0311  \text{ }    0.0524  \text{ }    0.0739  \text{ }    0.0546  \text{ }    0.0103  \text{ }    0.0771  \text{ }    0.0710  \text{ }    0.0422  \text{ }    0.0591  \text{ }    0.0650  \text{ }    0.0709  \text{ }    0.0875  \text{ }    0.0371  \text{ }    0.0019  \text{ }    0.0627  \text{ }    0.0593  \text{ }    0.0338  \text{ }    0.0454  \text{ }    0.0287  \text{ }    0.0362  \\
0.0663  \text{ }    0.0073  \text{ }    0.0583  \text{ }    0.0103  \text{ }    0.0418  \text{ }    0.0822  \text{ }    0.0536  \text{ }    0.0701  \text{ }    0.0631  \text{ }    0.0506  \text{ }    0.0820  \text{ }    0.0090  \text{ }    0.0219  \text{ }    0.0802  \text{ }    0.0503  \text{ }    0.0813  \text{ }    0.0045  \text{ }    0.0793  \text{ }    0.0475  \text{ }    0.0406  \\
0.0134  \text{ }    0.0300  \text{ }    0.1106  \text{ }    0.0771  \text{ }    0.0822  \text{ }    0.0139  \text{ }    0.0589  \text{ }    0.0044  \text{ }    0.0686  \text{ }    0.0496  \text{ }    0.0350  \text{ }    0.0764  \text{ }    0.0585  \text{ }    0.0368  \text{ }    0.0525  \text{ }    0.0445  \text{ }    0.0894  \text{ }    0.0301  \text{ }    0.0270  \text{ }    0.0410  \\
0.0580  \text{ }    0.0403  \text{ }    0.0338  \text{ }    0.0710  \text{ }    0.0536  \text{ }    0.0589  \text{ }    0.0145  \text{ }    0.0642  \text{ }    0.0416  \text{ }    0.0223  \text{ }    0.0800  \text{ }    0.0787  \text{ }    0.0549  \text{ }    0.0090  \text{ }    0.0782  \text{ }    0.0142  \text{ }    0.0765  \text{ }    0.0188  \text{ }    0.0697  \text{ }    0.0619  \\
0.0345  \text{ }    0.0335  \text{ }    0.0907  \text{ }    0.0422  \text{ }    0.0701  \text{ }    0.0044  \text{ }    0.0642  \text{ }    0.0634  \text{ }    0.0144  \text{ }    0.0200  \text{ }    0.0927  \text{ }    0.0180  \text{ }    0.0009  \text{ }    0.0644  \text{ }    0.0832  \text{ }    0.0987  \text{ }    0.0809  \text{ }    0.0629  \text{ }    0.0479  \text{ }    0.0130  \\
0.0239  \text{ }    0.0768  \text{ }    0.0350  \text{ }    0.0591  \text{ }    0.0631  \text{ }    0.0686  \text{ }    0.0416  \text{ }    0.0144  \text{ }    0.0169  \text{ }    0.0414  \text{ }    0.0515  \text{ }    0.0010  \text{ }    0.1144  \text{ }    0.0228  \text{ }    0.0261  \text{ }    0.0652  \text{ }    0.0913  \text{ }    0.0643  \text{ }    0.0326  \text{ }    0.0902  \\
0.0808  \text{ }    0.0253  \text{ }    0.1033  \text{ }    0.0650  \text{ }    0.0506  \text{ }    0.0496  \text{ }    0.0223  \text{ }    0.0200  \text{ }    0.0414  \text{ }    0.0721  \text{ }    0.0788  \text{ }    0.0083  \text{ }    0.0653  \text{ }    0.0118  \text{ }    0.0399  \text{ }    0.0256  \text{ }    0.0774  \text{ }    0.0260  \text{ }    0.0669  \text{ }    0.0693  \\
0.0066  \text{ }    0.0651  \text{ }    0.0850  \text{ }    0.0709  \text{ }    0.0820  \text{ }    0.0350  \text{ }    0.0800  \text{ }    0.0927  \text{ }    0.0515  \text{ }    0.0788  \text{ }    0.0467  \text{ }    0.0323  \text{ }    0.0446  \text{ }    0.0507  \text{ }    0.0113  \text{ }    0.0668  \text{ }    0.0021  \text{ }    0.0077  \text{ }    0.0315  \text{ }    0.0588  \\
0.0038  \text{ }    0.0916  \text{ }    0.0352  \text{ }    0.0875  \text{ }    0.0090  \text{ }    0.0764  \text{ }    0.0787  \text{ }    0.0180  \text{ }    0.0010  \text{ }    0.0083  \text{ }    0.0323  \text{ }    0.0501  \text{ }    0.1065  \text{ }    0.0553  \text{ }    0.0688  \text{ }    0.0594  \text{ }    0.1087  \text{ }    0.0678  \text{ }    0.0239  \text{ }    0.0176  \\
0.0628  \text{ }    0.0188  \text{ }    0.0502  \text{ }    0.0371  \text{ }    0.0219  \text{ }    0.0585  \text{ }    0.0549  \text{ }    0.0009  \text{ }    0.1144  \text{ }    0.0653  \text{ }    0.0446  \text{ }    0.1065  \text{ }    0.0716  \text{ }    0.0544  \text{ }    0.0577  \text{ }    0.0528  \text{ }    0.0564  \text{ }    0.0314  \text{ }    0.0264  \text{ }    0.0135  \\
0.1039  \text{ }    0.0305  \text{ }    0.0254  \text{ }    0.0019  \text{ }    0.0802  \text{ }    0.0368  \text{ }    0.0090  \text{ }    0.0644  \text{ }    0.0228  \text{ }    0.0118  \text{ }    0.0507  \text{ }    0.0553  \text{ }    0.0544  \text{ }    0.0208  \text{ }    0.0810  \text{ }    0.0757  \text{ }    0.0430  \text{ }    0.0629  \text{ }    0.0666  \text{ }    0.1029  \\
0.0002  \text{ }    0.0473  \text{ }    0.0001  \text{ }    0.0627  \text{ }    0.0503  \text{ }    0.0525  \text{ }    0.0782  \text{ }    0.0832  \text{ }    0.0261  \text{ }    0.0399  \text{ }    0.0113  \text{ }    0.0688  \text{ }    0.0577  \text{ }    0.0810  \text{ }    0.0602  \text{ }    0.0683  \text{ }    0.0558  \text{ }    0.0616  \text{ }    0.0665  \text{ }    0.0283  \\
0.0138  \text{ }    0.0201  \text{ }    0.0534  \text{ }    0.0593  \text{ }    0.0813  \text{ }    0.0445  \text{ }    0.0142  \text{ }    0.0987  \text{ }    0.0652  \text{ }    0.0256  \text{ }    0.0668  \text{ }    0.0594  \text{ }    0.0528  \text{ }    0.0757  \text{ }    0.0683  \text{ }    0.0637  \text{ }    0.0400  \text{ }    0.0132  \text{ }    0.0436  \text{ }    0.0406  \\
0.0820  \text{ }    0.0574  \text{ }    0.0035  \text{ }    0.0338  \text{ }    0.0045  \text{ }    0.0894  \text{ }    0.0765  \text{ }    0.0809  \text{ }    0.0913  \text{ }    0.0774  \text{ }    0.0021  \text{ }    0.1087  \text{ }    0.0564  \text{ }    0.0430  \text{ }    0.0558  \text{ }    0.0400  \text{ }    0.0584  \text{ }    0.0077  \text{ }    0.0127  \text{ }    0.0183  \\
0.0606  \text{ }    0.0742  \text{ }    0.0307  \text{ }    0.0454  \text{ }    0.0793  \text{ }    0.0301  \text{ }    0.0188  \text{ }    0.0629  \text{ }    0.0643  \text{ }    0.0260  \text{ }    0.0077  \text{ }    0.0678  \text{ }    0.0314  \text{ }    0.0629  \text{ }    0.0616  \text{ }    0.0132  \text{ }    0.0077  \text{ }    0.0011  \text{ }    0.1112  \text{ }    0.1433  \\
0.1037  \text{ }    0.0728  \text{ }    0.0543  \text{ }    0.0287  \text{ }    0.0475  \text{ }    0.0270  \text{ }    0.0697  \text{ }    0.0479  \text{ }    0.0326  \text{ }    0.0669  \text{ }    0.0315  \text{ }    0.0239  \text{ }    0.0264  \text{ }    0.0666  \text{ }    0.0665  \text{ }    0.0436  \text{ }    0.0127  \text{ }    0.1112  \text{ }    0.0199  \text{ }    0.0468  \\
0.0354  \text{ }    0.0453  \text{ }    0.0782  \text{ }    0.0362
\text{ }    0.0406  \text{ }    0.0410  \text{ }    0.0619  \text{ }
0.0130  \text{ }    0.0902  \text{ }    0.0693  \text{ }    0.0588
\text{ }    0.0176  \text{ }    0.0135  \text{ }    0.1029  \text{ }
0.0283  \text{ }    0.0406  \text{ }    0.0183  \text{ }    0.1433
\text{ }    0.0468  \text{ }    0.0189
\end{matrix} \right] }
\end{eqnarray*}

The transition matrix for arm $5$ is
\begin{eqnarray*} {\tiny \left[  \begin{matrix}0.0611  \text{ }    0.0525  \text{ }    0.0272  \text{ }    0.0718  \text{ }    0.0593  \text{ }    0.0697  \text{ }    0.0023  \text{ }    0.0739  \text{ }    0.0604  \text{ }    0.0050  \text{ }    0.0332  \text{ }    0.0591  \text{ }    0.0672  \text{ }    0.0666  \text{ }    0.0356  \text{ }    0.0759  \text{ }    0.0547  \text{ }    0.0325  \text{ }    0.0372  \text{ }    0.0547  \\
0.0525  \text{ }    0.0161  \text{ }    0.0462  \text{ }    0.0541  \text{ }    0.1034  \text{ }    0.0507  \text{ }    0.0334  \text{ }    0.0598  \text{ }    0.0166  \text{ }    0.0898  \text{ }    0.0464  \text{ }    0.0355  \text{ }    0.0222  \text{ }    0.0418  \text{ }    0.0835  \text{ }    0.1064  \text{ }    0.0105  \text{ }    0.0438  \text{ }    0.0657  \text{ }    0.0217  \\
0.0272  \text{ }    0.0462  \text{ }    0.1379  \text{ }    0.0795  \text{ }    0.0635  \text{ }    0.0036  \text{ }    0.0129  \text{ }    0.0787  \text{ }    0.0591  \text{ }    0.1029  \text{ }    0.0232  \text{ }    0.0384  \text{ }    0.0227  \text{ }    0.0431  \text{ }    0.0048  \text{ }    0.0792  \text{ }    0.0704  \text{ }    0.0131  \text{ }    0.0315  \text{ }    0.0622  \\
0.0718  \text{ }    0.0541  \text{ }    0.0795  \text{ }    0.0639  \text{ }    0.0740  \text{ }    0.0367  \text{ }    0.0826  \text{ }    0.0601  \text{ }    0.0610  \text{ }    0.0801  \text{ }    0.0057  \text{ }    0.0070  \text{ }    0.0243  \text{ }    0.0363  \text{ }    0.0281  \text{ }    0.0499  \text{ }    0.0360  \text{ }    0.0599  \text{ }    0.0498  \text{ }    0.0393  \\
0.0593  \text{ }    0.1034  \text{ }    0.0635  \text{ }    0.0740  \text{ }    0.0584  \text{ }    0.0398  \text{ }    0.0586  \text{ }    0.0628  \text{ }    0.0672  \text{ }    0.0195  \text{ }    0.0422  \text{ }    0.0313  \text{ }    0.0263  \text{ }    0.0353  \text{ }    0.0151  \text{ }    0.0555  \text{ }    0.0477  \text{ }    0.0525  \text{ }    0.0362  \text{ }    0.0514  \\
0.0697  \text{ }    0.0507  \text{ }    0.0036  \text{ }    0.0367  \text{ }    0.0398  \text{ }    0.0149  \text{ }    0.0850  \text{ }    0.0824  \text{ }    0.0158  \text{ }    0.0456  \text{ }    0.0818  \text{ }    0.0078  \text{ }    0.0058  \text{ }    0.0613  \text{ }    0.0803  \text{ }    0.0978  \text{ }    0.0253  \text{ }    0.0882  \text{ }    0.0982  \text{ }    0.0092  \\
0.0023  \text{ }    0.0334  \text{ }    0.0129  \text{ }    0.0826  \text{ }    0.0586  \text{ }    0.0850  \text{ }    0.1057  \text{ }    0.0178  \text{ }    0.1326  \text{ }    0.0105  \text{ }    0.0110  \text{ }    0.0419  \text{ }    0.0443  \text{ }    0.0902  \text{ }    0.0344  \text{ }    0.0269  \text{ }    0.0267  \text{ }    0.0364  \text{ }    0.1264  \text{ }    0.0205  \\
0.0739  \text{ }    0.0598  \text{ }    0.0787  \text{ }    0.0601  \text{ }    0.0628  \text{ }    0.0824  \text{ }    0.0178  \text{ }    0.0526  \text{ }    0.0528  \text{ }    0.0043  \text{ }    0.0139  \text{ }    0.0514  \text{ }    0.0190  \text{ }    0.0710  \text{ }    0.0862  \text{ }    0.0571  \text{ }    0.0827  \text{ }    0.0215  \text{ }    0.0489  \text{ }    0.0029  \\
0.0604  \text{ }    0.0166  \text{ }    0.0591  \text{ }    0.0610  \text{ }    0.0672  \text{ }    0.0158  \text{ }    0.1326  \text{ }    0.0528  \text{ }    0.0065  \text{ }    0.1016  \text{ }    0.0898  \text{ }    0.0422  \text{ }    0.0708  \text{ }    0.0175  \text{ }    0.0355  \text{ }    0.0013  \text{ }    0.0007  \text{ }    0.0828  \text{ }    0.0492  \text{ }    0.0366  \\
0.0050  \text{ }    0.0898  \text{ }    0.1029  \text{ }    0.0801  \text{ }    0.0195  \text{ }    0.0456  \text{ }    0.0105  \text{ }    0.0043  \text{ }    0.1016  \text{ }    0.0714  \text{ }    0.0483  \text{ }    0.0345  \text{ }    0.0399  \text{ }    0.0451  \text{ }    0.0783  \text{ }    0.0747  \text{ }    0.0488  \text{ }    0.0681  \text{ }    0.0269  \text{ }    0.0046  \\
0.0332  \text{ }    0.0464  \text{ }    0.0232  \text{ }    0.0057  \text{ }    0.0422  \text{ }    0.0818  \text{ }    0.0110  \text{ }    0.0139  \text{ }    0.0898  \text{ }    0.0483  \text{ }    0.1044  \text{ }    0.0361  \text{ }    0.1064  \text{ }    0.0848  \text{ }    0.0101  \text{ }    0.0608  \text{ }    0.0084  \text{ }    0.0761  \text{ }    0.0740  \text{ }    0.0434  \\
0.0591  \text{ }    0.0355  \text{ }    0.0384  \text{ }    0.0070  \text{ }    0.0313  \text{ }    0.0078  \text{ }    0.0419  \text{ }    0.0514  \text{ }    0.0422  \text{ }    0.0345  \text{ }    0.0361  \text{ }    0.0196  \text{ }    0.1013  \text{ }    0.0313  \text{ }    0.0383  \text{ }    0.0796  \text{ }    0.0969  \text{ }    0.0829  \text{ }    0.0744  \text{ }    0.0904  \\
0.0672  \text{ }    0.0222  \text{ }    0.0227  \text{ }    0.0243  \text{ }    0.0263  \text{ }    0.0058  \text{ }    0.0443  \text{ }    0.0190  \text{ }    0.0708  \text{ }    0.0399  \text{ }    0.1064  \text{ }    0.1013  \text{ }    0.0824  \text{ }    0.0914  \text{ }    0.0382  \text{ }    0.0861  \text{ }    0.0339  \text{ }    0.0405  \text{ }    0.0426  \text{ }    0.0345  \\
0.0666  \text{ }    0.0418  \text{ }    0.0431  \text{ }    0.0363  \text{ }    0.0353  \text{ }    0.0613  \text{ }    0.0902  \text{ }    0.0710  \text{ }    0.0175  \text{ }    0.0451  \text{ }    0.0848  \text{ }    0.0313  \text{ }    0.0914  \text{ }    0.0154  \text{ }    0.0278  \text{ }    0.0378  \text{ }    0.0019  \text{ }    0.0942  \text{ }    0.0452  \text{ }    0.0620  \\
0.0356  \text{ }    0.0835  \text{ }    0.0048  \text{ }    0.0281  \text{ }    0.0151  \text{ }    0.0803  \text{ }    0.0344  \text{ }    0.0862  \text{ }    0.0355  \text{ }    0.0783  \text{ }    0.0101  \text{ }    0.0383  \text{ }    0.0382  \text{ }    0.0278  \text{ }    0.0463  \text{ }    0.1093  \text{ }    0.1148  \text{ }    0.0716  \text{ }    0.0386  \text{ }    0.0233  \\
0.0759  \text{ }    0.1064  \text{ }    0.0792  \text{ }    0.0499  \text{ }    0.0555  \text{ }    0.0978  \text{ }    0.0269  \text{ }    0.0571  \text{ }    0.0013  \text{ }    0.0747  \text{ }    0.0608  \text{ }    0.0796  \text{ }    0.0861  \text{ }    0.0378  \text{ }    0.1093  \text{ }    0.0001  \text{ }    0.0002  \text{ }    0.0006  \text{ }    0.0005  \text{ }    0.0002  \\
0.0547  \text{ }    0.0105  \text{ }    0.0704  \text{ }    0.0360  \text{ }    0.0477  \text{ }    0.0253  \text{ }    0.0267  \text{ }    0.0827  \text{ }    0.0007  \text{ }    0.0488  \text{ }    0.0084  \text{ }    0.0969  \text{ }    0.0339  \text{ }    0.0019  \text{ }    0.1148  \text{ }    0.0002  \text{ }    0.2200  \text{ }    0.0147  \text{ }    0.0939  \text{ }    0.0119  \\
0.0325  \text{ }    0.0438  \text{ }    0.0131  \text{ }    0.0599  \text{ }    0.0525  \text{ }    0.0882  \text{ }    0.0364  \text{ }    0.0215  \text{ }    0.0828  \text{ }    0.0681  \text{ }    0.0761  \text{ }    0.0829  \text{ }    0.0405  \text{ }    0.0942  \text{ }    0.0716  \text{ }    0.0006  \text{ }    0.0147  \text{ }    0.0070  \text{ }    0.0366  \text{ }    0.0769  \\
0.0372  \text{ }    0.0657  \text{ }    0.0315  \text{ }    0.0498  \text{ }    0.0362  \text{ }    0.0982  \text{ }    0.1264  \text{ }    0.0489  \text{ }    0.0492  \text{ }    0.0269  \text{ }    0.0740  \text{ }    0.0744  \text{ }    0.0426  \text{ }    0.0452  \text{ }    0.0386  \text{ }    0.0005  \text{ }    0.0939  \text{ }    0.0366  \text{ }    0.0034  \text{ }    0.0207  \\
0.0547  \text{ }    0.0217  \text{ }    0.0622  \text{ }    0.0393
\text{ }    0.0514  \text{ }    0.0092  \text{ }    0.0205  \text{ }
0.0029  \text{ }    0.0366  \text{ }    0.0046  \text{ }    0.0434
\text{ }    0.0904  \text{ }    0.0345  \text{ }    0.0620  \text{ }
0.0233  \text{ }    0.0002  \text{ }    0.0119  \text{ }    0.0769
\text{ }    0.0207  \text{ }    0.3338
\end{matrix} \right] }
\end{eqnarray*}

\end{document}